\newtheorem{thm}{Theorem}[section]
\newtheorem{cor}[thm]{Corollary}
\newtheorem{remark}[thm]{Remark}
\newtheorem{lemma}[thm]{Lemma}
\newtheorem{prop}[thm]{Proposition}
\newtheorem{defn}[thm]{Definition}
 \numberwithin{equation}{section}
\newcommand{\bb}[1]{\mathbb{#1}}
\newcommand{\cl}[1]{\mathcal{#1}}
\newcommand{\bs}[1]{\boldsymbol{#1}}
\newcounter{egcounter}
\begin{document}
\title[Extending De Branges and Beurling]{Multivariable Sub-Hardy Hilbert Spaces Invariant under the action of $n$-tuple of Finite Blaschke factors}

\author{Sneh Lata}
\address{Department Of Mathematics\\
         Shiv Nadar University\\
         School of Natural Sciences,\\
         Gautam Budh Nagar - 203207\\
         Uttar Pradesh, India}
\email{sneh.lata@snu.edu.in}

\author{Sushant Pokhriyal}
\address{Department Of Mathematics\\
         Shiv Nadar University\\
         School of Natural Sciences\\
         Gautam Budh Nagar - 203207\\
         Uttar Pradesh, India}
\email{sp259@snu.edu.in}

\author{ Dinesh Singh}
\address{Centre For Lateral Innovation, Creativity and Knowledge\\
        SGT University\\
        Gurugram 122505\\
        Haryana, India}
\email{dineshsingh1@gmail.com}

\subjclass{Primary 47A15; Secondary 32A35, 47B38}

\keywords{de Branges spaces, Beurling's theorem, Wold decomposition, invariant subspaces, finite Blaschke, sub-Hardy Hilbert spaces.}

\begin{abstract} This paper deals with representing in concrete fashion those Hilbert spaces that are vector subspaces of the Hardy spaces 
$H^p(\bb D^n) \ (1\le p\le \infty)$ that remain invariant under the action of coordinate wise multiplication by an $n$-tuple $(T_{B_1},\dots, T_{B_n})$ of operators where each 
$B_i, \ 1\le i\le n,$ is a finite Blaschke factor on the open unit disc. The critical point to be noted is that these $T_{B_i}$ are assumed to be weaker than isometries as operators. Thus our main theorems extends the principal result of \cite{LS} in the following three directions: $(i)$ from one to several variables; $(ii)$ from multiplication with the coordinate function $z$ to an $n$-tuple of multiplication by finite Blaschke factors $B_i, \ 1\le i\le n;$ $(iii)$ from vector subspaces of $H^2(\bb D)$ to the case of vector subspaces of 
$H^p(\bb D^n), \ 1\le p\le \infty.$ We further derive a generalization of Slocinski's well known Wold type decomposition of a pair of doubly commuting isometries to the case of $n$-tuple of doubly commuting operators whose actions are weaker than isometries. 
\end{abstract}

\maketitle

\section{Introduction} 
One of the cornerstones of several approaches to describing the structure of invariant subspaces of some well known operators relies on the device first used fruitfully by Halmos while describing the invariant subspaces of the shift operator of infinite multiplicity. This was essentially the use of what is now referred to as the Wold decomposition. We refer to \cite{Hal, Hel}. The theory has evolved way beyond the classical Wold decomposition and has used to great advantage the central idea of a wandering subspace. This has led to breakthroughs-using deep technical arguments which have since been simplified a fair 
amount-in describing the invariant subspaces of the operator of multiplication by the coordinate function $z$ on the 
Bergman space and the Dirichlet space  of analytic functions on the open unit disc $\bb D$, see \cite{A, ARS, R}.  

The thrust behind the use of wandering subspace stems from relying on conditions on the operator of multiplication by $z$ that are far more general than requiring the operator to be an isometry. Some authors such as Izuchi  \cite{Izu} and Shimorin \cite{Shim} have produced a decomposition of Hilbert spaces where the operator of multiplication by 
$z$ satisfies certain conditions which encompass the requirement of being an isometry.  
  
The first and the third authors of this paper adopted an approach in \cite{LS} that also resulted in a description of Hilbert spaces that are vector subspaces of the classical Hardy space $H^2(\bb D)$ and on which the operator of multiplication by the coordinate function $z$ satisfies certain conditions that also encompass the requirement of being an isometry. These conditions are independent of the conditions of Shimorin in \cite{Shim}. Indeed, in \cite{LS} we give an example of a periodic weighted shift which satisfies our conditions as enunciated in \cite{LS} but does not 
satisfy the conditions of \cite{Shim}. In fact, we also have an example of an operator that does not satisfy our condition as given in \cite{LS} but satisfies the conditions from \cite{Shim}. We give details of these examples in Section 2 where we define a near-isometry which is simply an operator that satisfies the conditions of \cite{LS}. The additional advantage of our result in 
\cite{LS} is that it generalizes some well known results such as the scalar case of de Branges' generalization of Beurling's invariant subspace theorem \cite{Beu} and also the main result of \cite{SS2}. Interested readers can see \cite{Sar2} and \cite{SS2} for  simpler poofs of de Branges theorem for the scalar case.  

In this paper by overcoming some redoubtable technical obstacles, we have managed to adopt an approach along the lines of \cite{LS} to characterize a class of Hilbert spaces on $\bb D^n \ (n\ge 1)$ that are invariant under the action of multiplication by multivariable versions of finite Blaschke products. The conditions imposed on these operators are far weaker than the classical assumptions of being isometric.  

Our main features in this work below claim novelty on four counts: 
\begin{enumerate}[(i)]
\item We generalize-in Theorem 3.1-the main theorem of \cite{LS} to a multivariable situation where the action of multiplication by the coordinate function $z$ as in \cite{LS} is replaced by us in our Theorems\ \ref{Hp-bdd-sv} and 
Theorem \ref{p>2} to multiplication by a multivariable version of a finite Blaschke product acting by an $n$-tuple of finite Blaschke products acting on the Hardy spaces over $\bb D^n.$  

\item In the simplest case of $n=1,$ our Theorem \ref{Hp-bdd-sv} and Theorem \ref{p>2} are generalizations of the main theorem of \cite{LS} since the operator of multiplication by $z$ is replaced with multiplication by an arbitrary finite Blaschke product.

\item In addition, we have demonstrated our results to be valid across all Hardy spaces $H^p(\bb D^n)$ for $1\le p\le \infty$ and not just $H^2(\bb D^n).$

\item Finally, we have derive a multivariable generalization of the Slocinski's two variable Wold decomposition for doubly commuting tuple of isometries \cite{Slo} under much weaker assumptions than the requirement of isometricity.
\end{enumerate} 

The organization of the paper is as follows. We first introduce the concept of a ``near-isometry". We then produce a decomposition (Theorem \ref{wdinv}) for a near-isometry which reduces to the classical Wold decomposition when the near-isometry is actually an isometry. We note here that our decomposition  is outside the purview of Shimorin's decomposition \cite{Shim} which is also a generalization of the Wold decomposition. 

Proceeding in the same vein we produce a generalization of Slocinski's well known two variable Wold decomposition for doubly commuting tuples of isometries \cite{Slo} to the class of doubly commuting $n$-tuples of near-isometries (Theorem \ref{wdinvsv}). Indeed, our decomposition generalizes the $n$-variable extension 
\cite[Theorem 3.1]{Sar} of Slocinski's decomposition. This decomposition is of interest in its own right and at the same time it allows us to extend Theorem 3.1 from \cite{LS} and Theorem 4.1 from \cite{ST} in a far reaching manner to give representation of all Hilbert spaces that are algebraically (boundedly) contained in $H^p(\bb D^n), p\ge 2 \ (1\le p<2)$ on which an $n-$tuple of the operators of multiplication by finite Blaschke factors doubly commute and act as near-isometries (Theorem \ref{Hp-bdd-sv} and \ref{p>2}).  

We want to remind the reader that Theorem 3.1 from \cite{LS} and Theorem 4.1 from \cite{ST} do not assume any 
topological condition between the sub-Hardy Hilbert space and the Hardy space $H^2(\bb D),$ whereas our Theorem 
\ref{Hp-bdd-sv}, as mentioned above, assumes bounded containment of the sub-Hardy Hilbert space in $H^p(\bb D^n)$ when $1\le p<2.$ However, as we record in Remark \ref{n=1}, this containment condition in our result is redundant 
for all $p$ in the one-variable case, that is, $n=1.$ Thus, Theorem \ref{Hp-bdd-sv} and Theorem \ref{p>2} for $n=1$ 
generalize Theorem 3.1 from \cite{LS} and Theorem 4.1 from \cite{ST} without adding any extra hypothesis. 
Further, as mentioned in Remark \ref{n=1}, when $p=2,$ the bounded containment condition in Theorem \ref{Hp-bdd-sv} is redundant for all $n.$ Hence, we would like to record here that we have, in the spirit of \cite{Singh}, \cite{SS2}, and \cite{ST} dropped any topological connections between the sub-Hardy Hilbert space and the one-variable Hardy space $H^p(\bb D)$ for any $1\le p\le \infty,$ and the multivariable Hardy spaces $H^p(\bb D^n)$ for all $p\ge2.$   

\section{Notations and preparatory results}
For $n\ge 1,$ let $\bb D^n$  be the open unit polydisc in $\bb C^n, \ \bb T^n$ be the $n$-torus, and let $m$ be the 
normalized $n$-dimensional Lebesgue measure on $\bb T^n.$  Further, let $L^p(\bb T^n)$ and 
$H^p(\bb T^n), \ 1\le p\le \infty,$ denote the familiar Lebesgue spaces on $\bb T^n.$ It is well known that using the Poisson kernel for the polydisc $\bb D^n,$ $H^p(\bb T^n)$ can be identified 
with the Hardy space $H^p(\bb D ^n).$ Henceforth, whenever required, we will consider $H^p(\bb D^n)$ as a closed subspace of $L^p(\bb T^n)$ without any mention. Recall that for $1\le p<\infty, \ H^p(\bb D ^n)$ is a Banach space consisting of analytic functions 
on $\bb D^n$ with norm 
\begin{equation}
||f||_p= \left(\sup_{0\le r<1}\int_{\bb T^n} |f(re^{i\theta_1},\dots,re^{\theta_n})|^p dm\right)^{1/p} <\infty
\end{equation}
and $H^{\infty} (\bb D^n)$ is a Banach algebra consisting of bounded analytic functions on $\bb D^n$ with the supremum norm. In fact, $H^2(\bb D^n)$ is a Hilbert space with the above-mentioned norm.  

A $k$-tuple ${\bf T}=(T_1,\dots,T_k)$ of commuting operators on a Hilbert space $\cl H$ is said to be doubly commuting if $T_iT_j^*=T_j^*T_i$ whenever $i\ne j,$ and a subspace $\cl M$ is said to be joint $\bf T$-invariant(reducing) if $\cl M$ is invariant(reduces) under each $T_i.$ Let ${\bf T} = (T_1,\dots,T_k)$ be a doubly commuting tuple of operators on a Hilbert space $\cl H.$ For an integer $m\in \{1,\dots,k\},$ we define $I_m=\{1,\dots, m\},$ and for $A=\{i_1,\dots,i_l\}\subseteq I_m,$ we define ${\bf T}_A=(T_{i_1},\dots, T_{i_l})$ and 
$\bb N_{0}^A=\bb N_0\times \dots\times \bb N_0$ ($|A|$ copies of $\bb N_0=\bb N\cup \{0\}$). 
For ${\bs k}=(k_{i_1},\dots,k_{i_l})\in \bb N_0^A,$ we denote ${\bf T}^{\bs k}_A=T_{i_1}^{k_1}\cdots T_{i_l}^{k_l}.$
Further, we define $\cl W_i=\cl H\ominus T_i\cl H=Ker(T_i^*)$ and $\cl W_A=\bigcap\limits_{i\in A}\cl W_i.$ In case 
$A=\emptyset,$ we take 
$\cl W_A=\cl H.$ 

\begin{defn} Let $\cl H$ be a Hilbert space. An operator $T\in B(\cl H)$ is said to be a {\bf {\em near-isometry}} if it satisfies the following two conditions:
\begin{enumerate}
\item[(i)] There exists a constant $\delta>0$ such that $\delta ||x|| \le ||Tx||\le ||x||$ for all $x\in \cl H$.
\item[(ii)] For each $k\ge 1, \ T^{*k}T^{k+1}{\cl H}\subseteq T\cl H.$
\end{enumerate} 
\end{defn} 

We note that the conditions imposed on the operator of multiplication with $z$ in \cite{LS} are precisely the conditions for being a near-isometry. In \cite{Shim}, Shimorin produced a decomposition of Hilbert spaces $\cl H$ connected with operators $T$ that satisfy one of the following two conditions:
$$
||T^2x||^2 +||x||^2 \le 2 ||Tx||^2 \ \ {\rm for \ any} \ x\in \cl H
$$
or
$$
||Tx+y||^2 \le 2\left(||x||^2 + ||Ty||^2\right) \ \ {\rm for \ any}\  x, y\in \cl H
$$

As we promised in the introduction, we will now give examples to establish that our conditions as written in the definition of a near-isometry are independent of Shimorin's conditions. Let $\beta_n=\frac{1}{2^{\frac{n}{2}}}$ when $n$ is even, and $\beta_n=\frac{1}{2^{\frac{n-1}{2}}}$ when $n$ is odd. Then $T_z,$ multiplication by $z,$ on the weighted Hardy space $H^2(\beta)$ is clearly a near-isometry, but it does not satisfy any of the two above-stated 
conditions of Shimorin. We will now give an example of an operator that is not a near-isometry, but satisfy one of the Shimorin's conditions. We have taken it from \cite[Example 4.4]{Jab} where the author constructed it to study hyperexpansive composition operators.   
 
Let $X=\{(n,m)\in \bb Z\times \bb Z: n\le m\}$ and let $\{a_n\}_{n\in \bb Z}$ be a sequence of natural numbers such that $a_n=1$ if $n\ge 1$ and $a_n=2$ if $n\le 0.$ Further, let $\mu$ be measure on the power set of $X$ defined as 
$\mu(\{(n,n)\})=1$ for $n\in \bb Z$ and $\mu(\{(n,m)\})=a_n$ for $n<m.$ Set 
$$
e_{i,j}=\left\{
\begin{array}{ccc}
\frac{1}{\sqrt{a_i}}\chi_{\{(i,j)\}} & {\rm if} & i<j \\
\chi_{\{(i,j)\}} & {\rm if} & i=j
\end{array}\right.
$$

Then $\{e_{i,j}: (i,j)\in X\}$ is an orthonormal basis for $L^2(X,\mu)$ and  
$$
T(e_{i,j})=\left\{
\begin{array}{ccc}
e_{i,j+1} & {\rm if} & i<j \\
e_{i+1,i+1}+\sqrt{a_i} e_{i,i+1}  & {\rm if} & i=j.
\end{array}\right.
$$
defines a bounded linear operator on $L^2(X,\mu).$ Simple computations yield 
$$
||T^2f||^2+||f||^2\le 2 ||Tf||^2
$$
 for all $f\in L^2(X,\mu)$ and hence $T$ satisfies one of the Shimorin's conditions. We will now show that 
 $T$ is not a near-isometry. For this, we will show that there exists a natural number $k$ such that 
 $T^{*k}T^{k+1} (L^2)\nsubseteq T( L^2)$ which is equivalent to showing that 
 $T^k(N)\not\perp T^{k+1}(L^2),$ where $N=L^2\ominus T(L^2).$  Let $f=e_{1,2}-e_{2,2}.$ Then $f\in N$ and 
 $T(f)=e_{1,3}-e_{3,3}-e_{2,3}\in T(N).$ If we now take $g=e_{1,1}\in L^2$, then 
 $T^2(g)=e_{3,3}+e_{2,3}+e_{1,3}\in T^2(L^2).$ Clearly, $T(f)\not\perp T^2(g)$ which shows that 
 $T(N)\not\perp T^2(L^2).$ Thus, $T^*T^2(L^2)\nsubseteq T(L^2).$ Hence, T is not a near-isometry. 
 
\begin{prop}\label{wdinv-prop} Let $(T_1,\dots,T_k)$ be a $k$-tuple of doubly commuting near-isometries, and let $A$ be a 
subset of $I_k.$ Then for each $j\in I_k\setminus A$
\begin{enumerate} 
\item[(i)] $\cl W_A$ reduces $T_j,$
\item[(ii)] $\cl W_A\ominus T_j\cl W_A=\cl W_A\bigcap W_j,$
\item[(iii)] $T_j$ is a near-isometry on $\cl W_A.$
\end{enumerate}
\end{prop}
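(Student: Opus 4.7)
The plan is to establish (i) first, since both (ii) and (iii) follow from it combined with the double-commutation hypothesis and the defining properties of a near-isometry. The conceptual engine throughout is the identity $T_i T_j^* = T_j^* T_i$ for $i \ne j$, together with injectivity of each $T_j$ (which is immediate from the bounded-below condition $\delta\|x\| \le \|T_j x\|$).

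For (i), I would take $x \in \cl W_A$, so $T_i^* x = 0$ for every $i \in A$, and fix $j \in I_k \setminus A$. For each $i \in A$ we have $i \ne j$, hence $T_i^* T_j x = T_j T_i^* x = 0$, which shows $T_j x \in \cl W_A$, and likewise $T_i^* T_j^* x = T_j^* T_i^* x = 0$ gives $T_j^* x \in \cl W_A$. So $\cl W_A$ reduces $T_j$.

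For (iii), because $\cl W_A$ reduces $T_j$, the adjoint of $T_j\big|_{\cl W_A}$ is $T_j^*\big|_{\cl W_A}$, and the inequality $\delta\|x\| \le \|T_j x\| \le \|x\|$ descends verbatim to $\cl W_A$. The nontrivial part is to verify the second near-isometry condition $T_j^{*k} T_j^{k+1} \cl W_A \subseteq T_j \cl W_A$. Given $x \in \cl W_A$, the hypothesis on $\cl H$ supplies $y \in \cl H$ with $T_j y = T_j^{*k} T_j^{k+1} x$, and $y$ is unique since $T_j$ is injective. Part (i) ensures $T_j y \in \cl W_A$, so for each $i \in A$ we have $0 = T_i^* T_j y = T_j T_i^* y$, and injectivity of $T_j$ forces $T_i^* y = 0$; therefore $y \in \cl W_A$, as required. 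This pullback step is the one subtle point of the proposition.

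For (ii), I would first note that because $T_j$ is bounded below on $\cl W_A$, the image $T_j \cl W_A$ is closed, so the orthogonal complement $\cl W_A \ominus T_j \cl W_A$ is well-defined. The inclusion $\cl W_A \cap \cl W_j \subseteq \cl W_A \ominus T_j \cl W_A$ is immediate: if $T_j^* x = 0$ then $\langle x, T_j z\rangle = \langle T_j^* x, z\rangle = 0$ for every $z \in \cl W_A$. Conversely, if $x \in \cl W_A$ is orthogonal to $T_j \cl W_A$, then by (i) the vector $T_j^* x$ lies in $\cl W_A$, and the pairing $\langle T_j^* x, z\rangle = \langle x, T_j z\rangle = 0$ for every $z \in \cl W_A$ together with $T_j^* x \in \cl W_A$ forces $T_j^* x = 0$, i.e.\ $x \in \cl W_j$.
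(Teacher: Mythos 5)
Your proof is correct and follows essentially the same route as the paper's: double commutation gives (i), the reducing property plus the orthogonality pairing gives (ii), and (iii) descends from the near-isometry condition on $\cl H$. Your pullback argument in (iii) --- producing the unique preimage $y$ and using injectivity of $T_j$ together with $T_i^*T_j = T_jT_i^*$ to show $y \in \cl W_A$ --- carefully fills in a step the paper asserts in one line, which is a welcome addition rather than a divergence.
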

\begin{proof} Fix $j\in I_k\setminus A.$ To show that $\cl W_A$ is invariant under $T_j,$ let $x\in \cl W_A.$ Then $x\in \cl W_i$ for each $i\in A$ which implies that $T_i^*(x)=0$ for each $i\in A$. Thus, $T_i^*\left(T_jx\right)=T_j\left(T^*_ix\right)=0$ for each $i\in A.$ 
Therefore, $T_j(x)\in \cl W_A$ which shows that $\cl W_A$ is invariant under $T_j.$ By using similar arguments and the fact that 
each $T_i$ commutes with $T_j,$ we deduce that $\cl W_A$ is invariant under $T_j^*.$ Hence, $\cl W_A$ reduces $T_j.$ 

To prove $(ii)$, let $x\in \cl W_A\bigcap W_j.$ Then $T_j^*(x)=0$ which implies, $\langle{T_j^*x,h}\rangle=\langle{x,T_jh}\rangle=0$ for all $h\in \cl W_A$ which 
proves that 
$x\in \cl W_A\ominus T_j\cl W_A.$ To prove the other containment, let $x\in \cl W_A\ominus T_j\cl W_A.$ Then 
$\langle{T_j^*x,h}\rangle= \langle{x,T_jh}\rangle=0$ for all $h\in \cl W_A.$ This implies that $T_j^*x\perp \cl W_A.$ But 
$T_j^*(x)\in \cl W_A,$ as $\cl W_A$ is invariant under $T_j^*.$ Therefore, $T_j^*(x)=0$ which establishes that $x\in \cl W_A\cap W_j,$ and thereby completes the proof of $(ii)$. 

To prove $(iii)$, we first note that $T_j$ is clearly 
a contraction and bounded below on $\cl W_A.$ Now, using 
the facts that $\cl W_A$ reduces $T_j$ and $T_j$ is a near-isometry on $\cl H,$ we conclude that $T_j^{*m}T_j^{m+1}(\cl W_A)\subseteq T_j(\cl W_A)$ for all $m\ge 0.$ Lastly, since $\cl W_A$ reduces $T_j,$ therefore $T_j^{*m}T_j^{m+1}(\cl W_A)\subseteq T_j(\cl W_A)$ for all $m\ge 0$ even when we consider $T_j$ as an operator on $\cl W_A.$ Hence, $T_j$ is a near-isometry on $\cl W_A.$
\end{proof}

The following simple observations from operator theory are extremely handy for our proof of Theorem \ref{wdinvsv}. 

\begin{lemma}\label{ot} Let $T_1\in B(\cl H)$ be a bounded below operator. Suppose $\cl W$ is a closed subspace of $\cl H$ and $T_2\in B(\cl H)$ is an injective operator such that
\begin{enumerate}
\item[(i)] $T_1T_2=T_2T_1,$
\item[(ii)] $T_1(\cl W)\subseteq \cl W,$
\item[(iii)] $T_2^k(W)\perp T_2^m(\cl H)$ whenever $0\le k<m.$
\end{enumerate}
Then the following equalities hold:
\begin{enumerate}
\item[(i)] $T_1\left(\bigoplus\limits_{m=0}^\infty T_2^m\cl W\bigoplus \bigcap\limits_{m=0}^\infty T_2^m\cl H\right) = \bigoplus\limits_{m=0}^\infty T_1T_2^m\cl W\bigoplus \bigcap\limits_{m=0}^\infty T_1T_2^m\cl H$,
\item[(ii)] $T_1\left(\bigcap\limits_{m=0}^\infty T_2^m\cl R\right) = \bigcap\limits_{m=0}^\infty T_1T_2^m\cl R$ 
 \ for \ any \ subspace \ $R$ \ of \ $\cl H$,
\item[(iii)] $\bigcap\limits_{m_1=0}^\infty T_1^{m_1}\left(\bigoplus\limits_{m_2=0}^\infty T_2^{m_2}\cl W\right)
=\bigoplus\limits_{m_2=0}^\infty T_2^{m_2}\left(\bigcap\limits_{m_1=0}^\infty T_1^{m_1}\cl W\right)$
\end{enumerate}
\end{lemma}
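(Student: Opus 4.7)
The plan is to prove the three parts in the order (ii), (i), (iii), since the clean injectivity identity (ii) unlocks the orthogonal-direct-sum identity (i), and both ingredients feed into (iii).

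For (ii), the inclusion $\subseteq$ follows at once from $T_1 T_2^m = T_2^m T_1$. For $\supseteq$, given $y\in\bigcap_m T_1 T_2^m\cl{R}$, choose $r_m\in\cl{R}$ with $y=T_1(T_2^m r_m)$ for each $m$; injectivity of $T_1$ forces the vectors $T_2^m r_m$ all to equal a single $s\in\cl{H}$, and this $s$ lies in $T_2^m\cl{R}$ for every $m$ and satisfies $T_1 s=y$. For (i), commutativity gives $T_1 T_2^m\cl{W}=T_2^m T_1\cl{W}\subseteq T_2^m\cl{W}$, and hypothesis (iii) applied to $\cl{W}\subseteq\cl{H}$ implies that the subspaces $T_1 T_2^m\cl{W}$ are mutually orthogonal and each orthogonal to $\bigcap_m T_2^m\cl{H}$. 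Since $T_1$ is bounded below, its restriction to any closed subspace has closed range and is continuous, so $T_1$ carries the closed orthogonal sum $\bigoplus_m T_2^m\cl{W}\oplus\bigcap_m T_2^m\cl{H}$ onto the closed orthogonal sum $\bigoplus_m T_1 T_2^m\cl{W}\oplus T_1\bigl(\bigcap_m T_2^m\cl{H}\bigr)$. Invoking (ii) with $\cl{R}=\cl{H}$ identifies the last summand with $\bigcap_m T_1 T_2^m\cl{H}$, which yields (i).

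For (iii), the same commutation/continuity argument (applied with $T_1^{m_1}$ in place of $T_1$) gives
\begin{equation*}
T_1^{m_1}\Bigl(\bigoplus_{m_2} T_2^{m_2}\cl{W}\Bigr)=\bigoplus_{m_2} T_2^{m_2}(T_1^{m_1}\cl{W}),
\end{equation*}
so the left-hand side of (iii) equals $\bigcap_{m_1}\bigoplus_{m_2} T_2^{m_2}(T_1^{m_1}\cl{W})$. The inclusion $\supseteq$ is immediate. For $\subseteq$, let $x$ lie in this intersection; for each $m_1$ expand $x=\sum_{m_2} T_2^{m_2}u_{m_2}^{(m_1)}$ with $u_{m_2}^{(m_1)}\in T_1^{m_1}\cl{W}\subseteq\cl{W}$. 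Hypothesis (iii) of the lemma makes the subspaces $T_2^{m_2}\cl{W}$ mutually orthogonal, so each summand $T_2^{m_2}u_{m_2}^{(m_1)}$ coincides with the orthogonal projection of $x$ onto $T_2^{m_2}\cl{W}$ and is therefore independent of $m_1$; injectivity of $T_2$ then forces the vectors $u_{m_2}^{(m_1)}$ themselves to be independent of $m_1$. Writing $u_{m_2}$ for the common value, we have $u_{m_2}\in\bigcap_{m_1} T_1^{m_1}\cl{W}$, so $x\in\bigoplus_{m_2} T_2^{m_2}\bigl(\bigcap_{m_1} T_1^{m_1}\cl{W}\bigr)$.

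The delicate checkpoints are the interchange of $T_1$ with the closure defining an infinite orthogonal sum in (i), and the uniqueness-of-expansion step in (iii). The former relies on $T_1$ being bounded below, which guarantees that $T_1$ sends closed subspaces to closed subspaces so that no vectors are lost when passing to the closure; the latter crucially combines hypothesis (iii) of the lemma with injectivity of $T_2$ to extract a single sequence of coefficients from the $m_1$-indexed family of expansions. If either the orthogonality or the injectivity were weakened, the coefficients could genuinely differ across $m_1$ and the proposed intersection-versus-direct-sum interchange would break.
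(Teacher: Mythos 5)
The paper states this lemma without proof (it is introduced only as a collection of ``simple observations''), so there is no in-paper argument to compare your attempt against; your proof supplies exactly the details the authors omit, and it is correct: injectivity of $T_1$ for part (ii), the bounded-below/closed-range mechanism for pushing $T_1$ through the closed orthogonal sum in part (i), and the uniqueness of the orthogonal expansion combined with injectivity of $T_2$ in part (iii) are the right ingredients. The one point worth flagging is that in the expansion step of (iii) you implicitly need each $T_2^{m_2}\bigl(T_1^{m_1}\cl W\bigr)$ to be closed, so that every element of the corresponding orthogonal summand genuinely has a preimage in $T_1^{m_1}\cl W$; this does not follow from injectivity of $T_2$ alone, but it holds (and the $\bigoplus$ notation is only coherent) in every application the paper makes of the lemma, where $T_2$ is a near-isometry and hence bounded below.
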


\section{Wold decomposition  for doubly commuting tuples of near-isometries}
A closed subspace $\cl W$ of a Hilbert space $\cl H$ is said to be a wandering subspace for a bounded linear operator $T$ on 
 $\cl H$ if $T^{k}\cl W\perp T^l\cl W$ whenever $k\ne l.$ Further, an operator $T\in B(\cl H)$ is said to be a shift on $\cl H$ if there exists 
a wandering subspace $\cl W$ of $T$ such that
$$
\cl H = \bigoplus\limits_{k=0}^\infty T^k\cl W.
$$
%

\begin{thm}{\bf (Wold decomposition theorem)}\cite[Page 109]{Hof} Let $T$ be an isometry on a Hilbert space $\cl H.$ Then we can decompose $\cl H$ into two reducing subspaces for $T$ as $\cl H=\cl H_s\oplus \cl H_u$ such that $T|_{\cl H_s}$ is a shift and $T|_{\cl H_u}$ is a unitary. Moreover,  
$$
\cl H_s = \bigoplus\limits_{k=0}^\infty T^k{\cl W} \quad and \quad \cl H_u = \bigcap\limits_{k=0}^\infty T^k{\cl H} 
$$ 
where $\cl W=\cl H\ominus T\cl H$
\end{thm}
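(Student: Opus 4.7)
The plan is to follow the classical argument: build the wandering subspace $\cl W=\cl H\ominus T\cl H$, construct the shift part as the orthogonal sum of its translates, identify the unitary part as the intersection of all forward orbits of $\cl H$, and show these two pieces are orthogonal, reduce $T$, and exhaust $\cl H$.

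First I would verify that $\cl W$ is a wandering subspace for $T$. Since $T$ is an isometry, $T^*T=I$, so $\cl W=\ker T^*$. For $w_1,w_2\in \cl W$ and $0\le k<l$, the inner product $\langle T^k w_1, T^l w_2\rangle = \langle w_1, T^{l-k} w_2\rangle$ vanishes because $T^{l-k} w_2\in T\cl H$ while $w_1\perp T\cl H$. Hence $\cl H_s:=\bigoplus_{k\ge 0} T^k\cl W$ is a well-defined closed subspace on which $T$ acts as a shift, with $\cl W$ as generator.

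Next I would prove orthogonality $\cl H_s\perp \cl H_u$, where $\cl H_u:=\bigcap_{k\ge 0}T^k\cl H$. Any vector in $\cl H_u$ lies in $T^{j+1}\cl H$ for every $j\ge 0$, hence is orthogonal to $T^j\cl W\subseteq T^j(\cl H\ominus T\cl H)$. The heart of the argument is to show that every $x\in \cl H$ lies in $\cl H_s\oplus \cl H_u$. The plan is to split iteratively: write $x=w_0+Tx_1$ with $w_0\in \cl W$ and $x_1\in \cl H$ (using $\cl H=\cl W\oplus T\cl H$), then split $x_1=w_1+Tx_2$, and so on, obtaining
\begin{equation*}
x=\sum_{k=0}^{n-1}T^k w_k + T^n x_n,\qquad w_k\in \cl W,\ x_n\in \cl H.
\end{equation*}
Because $T$ is an isometry and the $T^k w_k$ are pairwise orthogonal, $\|x\|^2=\sum_{k=0}^{n-1}\|w_k\|^2+\|x_n\|^2$, so $\sum\|w_k\|^2\le \|x\|^2$ and the partial sums $\sum T^k w_k$ converge in $\cl H_s$. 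Consequently $T^n x_n=x-\sum_{k<n}T^k w_k$ converges to some $u\in \cl H$; and for every fixed $m$, $T^n x_n\in T^m\cl H$ for all $n\ge m$, so $u\in \cl H_u$ by closedness of each $T^m\cl H$ (which is closed since $T^m$ is an isometry). This gives the decomposition.

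Finally I would check that both summands reduce $T$ and identify the pieces. Invariance of $\cl H_s$ under $T$ is immediate. For $T^*$, note $T^*w=0$ on $\cl W$ and $T^*T^{k+1}w=T^k w$, so $T^*\cl H_s\subseteq \cl H_s$. For $\cl H_u$, $T\cl H_u\subseteq \cl H_u$ is clear, and $T^*\cl H_u\subseteq\cl H_u$ follows because any $x\in \cl H_u$ can be written as $x=T^{k+1}y_k$, giving $T^*x=T^ky_k\in T^k\cl H$ for every $k$. The restriction of $T$ to $\cl H_s$ is a shift by construction. On $\cl H_u$, $T$ is an isometry that is also surjective: given $u\in \cl H_u\subseteq T\cl H$, write $u=Ty$ with $y=T^*u\in \cl H_u$ by the previous step, so $T|_{\cl H_u}$ is unitary.

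The main technical obstacle is the iterative decomposition step: controlling the $\ell^2$ summability of $\{w_k\}$ and verifying that the residual $T^n x_n$ converges inside $\bigcap_m T^m\cl H$. Every other step reduces to a direct use of $T^*T=I$, orthogonality of $\cl W$ to $T\cl H$, and the fact that $T^m\cl H$ is closed for each $m$.
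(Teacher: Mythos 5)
Your proof is correct. Note that the paper does not actually prove this theorem in the text (it is quoted from Hoffman); the closest thing to ``the paper's own proof'' is its Theorem \ref{wdinv}, the near-isometry analogue, whose argument specializes to the isometric case, and that argument is genuinely different from yours. There, completeness of the decomposition is obtained by a short orthocomplement computation: from $T^m\cl W\perp T^{m+1}\cl H$ one gets the orthogonal splitting $T^m\cl H=T^m\cl W\oplus T^{m+1}\cl H$ for every $m\ge 0$, and then any $x$ orthogonal to every $T^m\cl W$ is seen, by an immediate induction down this chain, to lie in every $T^m\cl H$; hence $\bigl(\bigoplus_{m}T^m\cl W\bigr)^{\perp}=\bigcap_{m}T^m\cl H$ and no vector is ever expanded in a series. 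You instead run the classical constructive argument: iterate $\cl H=\cl W\oplus T\cl H$ to write $x=\sum_{k<n}T^kw_k+T^nx_n$, extract $\sum\|w_k\|^2\le\|x\|^2$ from the Pythagorean identity, and push the residual into $\bigcap_m T^m\cl H$ using closedness of each $T^m\cl H$. Both proofs rest on the same three facts ($\cl W$ is wandering, each $T^m\cl H$ is closed, $\cl H=\cl W\oplus T\cl H$), and all your supporting steps (reducing subspaces, unitarity of $T|_{\cl H_u}$ via $y=T^*u$) are sound. The trade-off: the paper's route is shorter and is the one that carries over verbatim to near-isometries, where the identity $\|T^kw_k\|=\|w_k\|$ you invoke is unavailable (though your convergence argument would still survive, since it only needs $\sum_k\|T^kw_k\|^2\le\|x\|^2$); your route has the merit of exhibiting the decomposition of each individual vector explicitly.
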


The Wold decomposition theorem plays a vital role in may areas of operator theory and operator algebras, in particular, invariant subspace problem for Hilbert spaces of holomorphic functions. It has greatly simplified the proofs of many classical results, like Beurling's theorem, its generalization due to de Branges, and many of their generalizations; thereby it has generated a new perspective to the area. 

In \cite{Slo}, Slocinski gave a Wold type decomposition for pairs of doubly commuting isometries which facilitated a de Branges theorem for the bidisc \cite{Singh, Red}. Recently, Sarkar \cite{Sar} generalized Slocinski's decomposition to several variables and used it in \cite{SSW} to obtain a vector-valued version of Beurling's theorem for the polydisc. 

\begin{thm}\cite[Theorem 3.1]{Sar}\label{sar} Let $V=(V_1,\dots, V_l)$ be an $l$-tuple ($l\ge 2$) of doubly commuting isometries on a Hilbert space 
$\cl H.$ Then for every $m\in\{2,\dots,l\},$ there exist $2^m$ joint $(V_1,\dots,V_m)-$reducing subspaces $\{\cl H_A:A\subseteq I_m\}$ such that 
$$
\cl H=\bigoplus _{A\subseteq I_m}\cl H_A.
$$
Moreover, for each $A\subseteq I_m$
$$
\cl H_A=\bigoplus_{\bs k\in \bb N_0^A}V_A^{\bs k}\left(\bigcap\limits_{\bs j\in \bb N_0^{I_m\setminus A}}V_{I_m\setminus A}^{\bs j}\cl W_A\right),
$$
and whenever $\cl H_A\ne \{0\}, \ V_i|_{\cl H_A}$ is a shift for $i\in A$ and unitary for $i\in I_m\setminus A.$ 
\end{thm}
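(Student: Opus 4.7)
The plan is induction on $m$. The base case $m=2$ is Slocinski's two-variable decomposition \cite{Slo}. For the inductive step, I assume the statement for tuples of length $m-1$ and establish it for the $m$-tuple $(V_1,\dots,V_m)$.

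The first move is to apply the classical Wold decomposition to the single isometry $V_m$, giving
$$
\cl H = \cl H_m^s \oplus \cl H_m^u, \qquad \cl H_m^s = \bigoplus_{k\ge 0} V_m^k \cl W_m, \qquad \cl H_m^u = \bigcap_{k\ge 0} V_m^k \cl H.
$$
Since isometries are near-isometries, Proposition \ref{wdinv-prop}(i) shows that $\cl W_m = \ker V_m^*$ reduces each $V_j$ with $j<m$; because $V_j$ commutes with $V_m$, the entire subspace $\cl H_m^s$ reduces every $V_j$, and so does its orthogonal complement $\cl H_m^u$. On each of the two pieces the restriction of $(V_1,\dots,V_{m-1})$ remains a doubly commuting tuple of isometries, while $V_m$ acts as a shift on $\cl H_m^s$ and as a unitary on $\cl H_m^u$. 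Applying the inductive hypothesis separately to each piece produces $2^{m-1}$ reducing subspaces in each, giving $2^m$ in total. For $A \subseteq I_m$, I would define $\cl H_A$ to be the $(A\setminus\{m\})$-piece of the induction inside $\cl H_m^s$ when $m \in A$, and the $A$-piece inside $\cl H_m^u$ when $m \notin A$. By construction $V_m|_{\cl H_A}$ is a shift if $m \in A$ and a unitary otherwise, and the shift/unitary behaviour of the remaining $V_i$ is inherited from the induction.

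The main technical obstacle is to collapse the nested formula from the induction into the single closed form stated in the theorem. For $m \in A$ with $A' = A\setminus\{m\}$, the induction supplies
$$
\cl H_A = \bigoplus_{\bs k \in \bb N_0^{A'}} V_{A'}^{\bs k}\left(\bigcap_{\bs j \in \bb N_0^{I_{m-1}\setminus A'}} V_{I_{m-1}\setminus A'}^{\bs j}\, (\cl H_m^s \cap \cl W_{A'})\right).
$$
Because $\cl W_{A'}$ reduces $V_m$, the Wold decomposition of $V_m|_{\cl W_{A'}}$ gives $\cl H_m^s \cap \cl W_{A'} = \bigoplus_{l\ge 0} V_m^l(\cl W_{A'} \cap \cl W_m) = \bigoplus_{l\ge 0} V_m^l \cl W_A$. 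Substituting this in and invoking Lemma \ref{ot}(iii) to interchange the intersection over $\bb N_0^{I_{m-1}\setminus A'}$ with the direct sum over $l$ --- noting $I_{m-1}\setminus A' = I_m\setminus A$ --- and then merging the outer direct sums indexed by $\bb N_0^{A'}$ and $\bb N_0$ into one indexed by $\bb N_0^A$, delivers the stated expression. The case $m \notin A$ proceeds symmetrically: the induction yields the same shape of formula but with $\cl H_m^s$ replaced by $\cl H_m^u$ and $I_m$ by $I_{m-1}$, and the identity $\cl H_m^u \cap \cl W_A = \bigcap_{k\ge 0} V_m^k \cl W_A$ (unitarity of $V_m$ on $\cl H_m^u$ together with reducibility of $\cl W_A$) combined with Lemma \ref{ot}(ii) absorbs the missing $V_m$-intersection into the intersection over $I_{m-1}\setminus A$, again producing the desired closed form.
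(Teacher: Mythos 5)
Your argument is correct, but it is organized differently from what the paper does. First, note that the paper never proves Theorem \ref{sar} at all: it is quoted from \cite{Sar} and then recovered in Remark 3.6 as the isometric special case of the more general Theorem \ref{wdinvsv} for near-isometries, so the relevant comparison is with the proof of that theorem. Both arguments are inductions on $m$ driven by the one-variable Wold decomposition (Theorem \ref{wdinv}) and the interchange identities of Lemma \ref{ot}, but the inductions run in opposite directions. The paper starts from the decomposition for $(T_1,\dots,T_r)$ and adjoins the \emph{new} operator $T_{r+1}$ by applying the one-variable decomposition to $T_{r+1}$ restricted to each innermost generating subspace $\cl W_A$ (and to $\cl H$ for $A=\emptyset$), then pushes the resulting sum/intersection through the outer operators via Lemma \ref{ot}. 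You instead peel off $V_m$ at the top level, split $\cl H$ into its $V_m$-shift and $V_m$-unitary parts, check (correctly, via Proposition \ref{wdinv-prop}(i) and double commutativity) that both parts reduce the remaining $V_j$, and recurse on each half; the closed form is then recovered by identifying the relative wandering subspaces $\cl H_m^s\cap\cl W_{A'}$ and $\cl H_m^u\cap\cl W_A$ with $\bigoplus_l V_m^l\cl W_A$ and $\bigcap_l V_m^l\cl W_A$ respectively and invoking Lemma \ref{ot}(ii)--(iii). Your route buys a cleaner inductive step (the two reducing pieces are handled symmetrically and the bookkeeping of which formula each $A$ inherits is transparent), at the cost of importing Slocinski's theorem \cite{Slo} as the base case, whereas the paper builds the $m=2$ case from scratch out of Theorem \ref{wdinv} — which is what allows its version to go through for near-isometries, where no off-the-shelf base case is available.
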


In this paper, we work with near-isometries which may not necessarily be isometries, and so we do not have the Wold decomposition or it's above-mentioned extensions at our disposal. Interestingly, we establish an analogue (Theorem \ref{wdinv}) of Wold decomposition for near-isometries. Furthermore, following the ideas of Sarkar \cite{Sar}, we extend this decomposition to doubly commuting tuples of near-isometries (Theorem \ref{wdinvsv}) which, 
in fact, generalizes the above-mentioned Sarkar's decomposition (Theorem \ref{sar}) to our setting.

The first and the third authors proved the following analogue of the Wold decomposition for near-isometries implicitly in the proof of their main theorem in \cite{LS}.
We would like to state and prove it here as an independent result as apart from being interesting in its own right it also acts as a foundation for our analogue of the Wold decomposition for doubly commuting tuples of near-isometries.

 \begin{thm}\label{wdinv} Let $T\in B(\cl H)$ be a near-isometry. Then $\cl H$ can be decomposed into two reducing subspaces of $T$ as 
$$\cl H=\cl H_0\oplus \cl H_1$$ such that $T$ is a shift on $H_0$ and invertible on $H_1.$ Moreover,  
$$
H_0=\bigoplus_{m=0}^{\infty} T^m\cl W, \quad \cl H_1= \bigcap_{m=0}^{\infty}T^m\cl H.
$$
where $\cl W=\cl H\ominus T\cl H$
\end{thm}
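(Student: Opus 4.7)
The plan is to peel off copies of the wandering subspace $\cl W=\cl H\ominus T\cl H$ one at a time and identify the residual piece with $\bigcap_m T^m\cl H$. Since $T$ is bounded below, each subspace $T^k\cl H$ is closed; in particular, the initial orthogonal decomposition $\cl H=\cl W\oplus T\cl H$ is legitimate and $\cl H_1=\bigcap_{m\ge 0}T^m\cl H$ is closed.

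The decisive step, which uses condition $(ii)$ in full, is the stronger orthogonality
\[
T^m\cl W\perp T^k\cl H\qquad\text{whenever } k>m\ge 0.
\]
For $m=0$ this is clear, since $T^k\cl H\subseteq T\cl H$ for $k\ge 1$ and $\cl W\perp T\cl H$. For $m\ge 1$, for $w\in\cl W$ and $h\in\cl H$ I would rewrite
\[
\langle T^m w,T^k h\rangle=\langle w,\,T^{*m}T^{m+1}(T^{k-m-1}h)\rangle,
\]
and invoke $T^{*m}T^{m+1}\cl H\subseteq T\cl H$ together with $w\perp T\cl H$ to conclude that the inner product vanishes. As a corollary $\cl W$ is wandering, so $\cl H_0=\bigoplus_{m\ge 0}T^m\cl W$ is well defined, and $\cl H_0\perp\cl H_1$ (every element of $\cl H_1$ lies in $T^{m+1}\cl H$).

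Next I would iterate the splitting $\cl H=\cl W\oplus T\cl H$. Given $x\in\cl H$, write $x=w_0+Tx_1$ with $w_0\in\cl W$, then decompose $x_1=w_1+Tx_2$, so that $x=w_0+Tw_1+T^2x_2$. Because $Tw_1\perp T^2\cl H$ by the previous step, this splitting is orthogonal: $\|x\|^2=\|w_0\|^2+\|Tw_1\|^2+\|T^2x_2\|^2$. Inductively,
\[
x=\sum_{m=0}^{k}T^m w_m+h_{k+1},\qquad h_{k+1}\in T^{k+1}\cl H,
\]
with $\sum_{m=0}^{\infty}\|T^m w_m\|^2\le\|x\|^2$. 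The series therefore converges to some $y\in\cl H_0$, whence $h_{k+1}\to x-y$. Since each $T^l\cl H$ is closed and contains every $h_{k+1}$ with $k+1\ge l$, the limit $x-y$ lies in $\cl H_1$. Combined with orthogonality this gives $\cl H=\cl H_0\oplus\cl H_1$.

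Finally, invariance $T\cl H_0\subseteq\cl H_0$ is immediate from the definition, and $T\cl H_1\subseteq\cl H_1$ is equally clear; together with $\cl H_0\perp\cl H_1$ these force $T^*$-invariance of each summand, so both subspaces reduce $T$. For invertibility of $T|_{\cl H_1}$, given $x\in\cl H_1\subseteq T\cl H$ write $x=Ty$ with $y$ unique by injectivity of $T$; then for each $k\ge 1$ write $x=T^k z_k$, and cancelling one factor of $T$ gives $y=T^{k-1}z_k\in T^{k-1}\cl H$, showing $y\in\cl H_1$. Hence $T(\cl H_1)=\cl H_1$, and being bounded below $T|_{\cl H_1}$ is invertible with bounded inverse, while $T|_{\cl H_0}$ is a shift by construction. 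I expect the main technical hurdle to be the orthogonality $T^m\cl W\perp T^k\cl H$ for $k>m$: this is precisely the place where condition $(ii)$ is consumed, and without it the inductive decomposition above would fail to produce orthogonal summands.
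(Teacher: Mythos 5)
Your proof is correct and follows essentially the same route as the paper: both hinge on deducing $T^m\cl W\perp T^{m+1}\cl H$ from condition (ii) of a near-isometry, using boundedness below for closedness of the ranges, and identifying the orthogonal complement of $\bigoplus_m T^m\cl W$ with $\bigcap_m T^m\cl H$. The only (cosmetic) difference is that you establish completeness of the decomposition by iteratively peeling off $\cl W$ from a general vector and passing to the limit, whereas the paper argues directly from the identity $T^m\cl H=T^m\cl W\oplus T^{m+1}\cl H$ that anything orthogonal to every $T^m\cl W$ lies in every $T^m\cl H$.
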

\begin{proof} Let $\cl W=\cl H\ominus T\cl H.$ We are given that $T$ is a near-isometry, therefore $T^{*m}T^{m+1}\cl H\subseteq T\cl H$ for all $m\ge 0$ which imply that $T^m\cl W\perp T^{m+1}\cl H$ for all $m\ge 0.$ Thus, $T^m\cl W\perp T^k\cl W$ whenever $m\ne k.$ Also, $T$ is bounded below, so each $T^m\cl W$ is a closed subspace of $\cl H.$ Let 
$\cl H_0=\bigoplus\limits_{m=0}^\infty T^m\cl W$ and $\cl H_1=\bigcap\limits_{m=0}^\infty T^m\cl H.$ We will show 
$\cl H_0^\perp = H_1.$ 

Since $T^m\cl W\perp T^{m+1}\cl H$ for all $m\ge 0,$ therefore $\cl H_0\perp \cl H_1$ which implies that $\cl H_1\subseteq \cl H_0^\perp.$ For the other containment, observe that $T^m\cl W\perp T^{m+1}\cl H$ for all $m\ge 0$ which implies that 
$T^m\cl H=T^m\cl W\oplus T^{m+1}\cl H$ for every $m\ge 0.$ Thus, if $x\perp T^m\cl W$ for all $m\ge 0,$ then $x\in T^m\cl H$ for all 
$m\ge 0.$ Therefore, $H_0^{\perp} = \cl H_1.$  

Lastly, by definition, $\cl H_0$ and $\cl H_1$ are both $T$-invariant, $T$ is a shift on $\cl H_0,$ and invertible on $\cl H_1.$ This completes the proof. 
\end{proof}
%

\begin{thm}\label{wdinvsv} Let $(T_1,\dots,T_k)$ be a doubly commuting $k$-tuple of near-isometries on a Hilbert space 
$\cl H.$ Then for any $m, \ 2\le m\le k,$ there exist $2^m$ joint $(T_1,\dots,T_m)$- reducing subspaces $\{\cl H_A:A\subseteq I_m\}$ 
such that 
$$
\cl H=\bigoplus _{A\subseteq I_m} \cl H_A
$$
where for a non-empty $A\subseteq I_m,$
\begin{equation}\label{rep1}
\cl H_A=\bigoplus_{{\bs r}\in \bb N_0^A} T^{\bs r}_A\left(\bigcap_{{\bs j}\in \bb N_0^{I_m\setminus A}}T^{\bs j}_{I_m\setminus A}\cl W_A\right),
\end{equation}
 and for $A=\emptyset,$
 \begin{equation}\label{empty}
 \cl H_A = \bigcap_{{\bs r}\in \bb N_0^{I_m}} T^{\bs r}_{I_m }\cl H.
 \end{equation}
Furthermore, for each $A\subseteq I_m,$ $T_i|_{\cl H_A}$ is a shift if $i\in A$ and is invertible if $i\in I_m\setminus A.$
\end{thm}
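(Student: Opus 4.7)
The approach is induction on $m$. The base case $m=1$ is a direct re-reading of Theorem \ref{wdinv}: the subsets of $I_1$ are $\{1\}$ and $\emptyset$, and the formulas give $\cl H_{\{1\}} = \bigoplus_{r \ge 0} T_1^r \cl W_{\{1\}} = \cl H_0$ and $\cl H_\emptyset = \bigcap_{r \ge 0} T_1^r \cl H = \cl H_1$. For the inductive step, I would assume the decomposition $\cl H = \bigoplus_{A \subseteq I_m} \cl H_A$ has been established in the promised form with the promised shift/invertible behaviour, and then apply Theorem \ref{wdinv} to $T_{m+1}|_{\cl H_A}$ for each $A \subseteq I_m$. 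This splits every $\cl H_A$ into a ``shift part'' and an ``invertible part'', refining the decomposition from $2^m$ to $2^{m+1}$ pieces indexed by subsets of $I_{m+1}$.

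To justify this move I would first verify that $T_{m+1}$ reduces each $\cl H_A$ and restricts to a near-isometry there. The reducing assertion uses double commutativity: $T_{m+1}$ and $T_{m+1}^*$ commute with every $T_i$ and $T_i^*$ for $i \le m$, and Proposition \ref{wdinv-prop}(i) already guarantees that $\cl W_A$ reduces $T_{m+1}$, so every intersection and direct sum defining $\cl H_A$ is $T_{m+1}$- and $T_{m+1}^*$-stable. The near-isometry property transfers to any reducing subspace $\cl K$: condition (i) is immediate, while for (ii) one writes $T^{*k}T^{k+1}\cl K \subseteq T\cl H \cap \cl K = T\cl K$, where the last equality holds because $\cl K$ reduces $T$.

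With these in hand, Theorem \ref{wdinv} gives $\cl H_A = \cl K_A^s \oplus \cl K_A^u$ with
\[
\cl K_A^s = \bigoplus_{s=0}^\infty T_{m+1}^s \bigl(\cl H_A \ominus T_{m+1}\cl H_A\bigr), \qquad \cl K_A^u = \bigcap_{s=0}^\infty T_{m+1}^s \cl H_A.
\]
The crux of the argument is to recognize $\cl K_A^s$ as the piece indexed by $A \cup \{m+1\} \subseteq I_{m+1}$ and $\cl K_A^u$ as the piece indexed by $A \subseteq I_{m+1}$. For this I would invoke Lemma \ref{ot}, especially parts (ii) and (iii), to push $T_{m+1}$ (and its powers) through the direct sums over $\bb N_0^A$ and the intersections over $\bb N_0^{I_m \setminus A}$, combined with the identity $\cl W_A \ominus T_{m+1}\cl W_A = \cl W_{A \cup \{m+1\}}$ supplied by Proposition \ref{wdinv-prop}(ii). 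The target formulas are
\[
\cl H_A \ominus T_{m+1}\cl H_A = \bigoplus_{\bs r \in \bb N_0^A} T_A^{\bs r}\Bigl(\bigcap_{\bs j \in \bb N_0^{I_m \setminus A}} T_{I_m \setminus A}^{\bs j}\, \cl W_{A \cup \{m+1\}}\Bigr),
\]
and symmetrically for $\cl K_A^u$ with the intersection index enlarged from $\bb N_0^{I_m \setminus A}$ to $\bb N_0^{I_{m+1} \setminus A}$.

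The main obstacle will be executing these rearrangements cleanly: commuting $T_{m+1}^s$ past the direct sum over $\bb N_0^A$ and the intersection over $\bb N_0^{I_m \setminus A}$ requires applying Lemma \ref{ot} in the correct order and checking its hypotheses (commutativity of $T_{m+1}$ with the relevant $T_i$, the reducing property of the wandering subspaces, and boundedness below of $T_{m+1}$) at each step, and aligning the resulting expressions with the asserted $(m+1)$-step forms demands careful bookkeeping of the index sets. Once the formula matching is done, the final shift/invertible dichotomy is automatic: for $i = m+1$ it is exactly the output of Theorem \ref{wdinv}, while for $i \le m$ it descends from the inductive hypothesis by restriction to the reducing subspaces $\cl K_A^s$ and $\cl K_A^u$.
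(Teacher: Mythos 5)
Your overall strategy---induction, Theorem \ref{wdinv} as the one-variable engine, Proposition \ref{wdinv-prop} for transferring the reducing and near-isometry properties, and Lemma \ref{ot} for the rearrangements---is the same as the paper's, and your preliminary verifications (that $T_{m+1}$ reduces each $\cl H_A$ and that a near-isometry restricts to a near-isometry on a reducing subspace) are correct. But you run the inductive step in the opposite direction from the paper, and that reversal opens a genuine gap. The paper applies Theorem \ref{wdinv} to $T_{r+1}$ restricted to the \emph{innermost} space $\cl W_A$, obtaining $\cl W_A=\bigoplus_s T_{r+1}^s(\cl W_A\cap\cl W_{r+1})\oplus\bigcap_s T_{r+1}^s\cl W_A$, and then pushes this internal orthogonal decomposition outward through $T_A^{\bs r}$ and the intersections via Lemma \ref{ot}; it never has to compute $\cl H_A\ominus T_{m+1}\cl H_A$. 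You instead apply Theorem \ref{wdinv} to $T_{m+1}|_{\cl H_A}$ and must then identify the wandering subspace $\cl H_A\ominus T_{m+1}\cl H_A$ with $\bigoplus_{\bs r}T_A^{\bs r}(\cl V_A\ominus T_{m+1}\cl V_A)$, where $\cl V_A$ denotes the core $\bigcap_{\bs j}T^{\bs j}_{I_m\setminus A}\cl W_A$. Since $\cl H_A\ominus T_{m+1}\cl H_A=\bigoplus_{\bs r}\bigl(T_A^{\bs r}\cl V_A\ominus T_A^{\bs r}T_{m+1}\cl V_A\bigr)$, the step you actually need is $T_A^{\bs r}\cl V_A\ominus T_A^{\bs r}T_{m+1}\cl V_A=T_A^{\bs r}(\cl V_A\ominus T_{m+1}\cl V_A)$. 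This is not among the conclusions of Lemma \ref{ot}, and it is false for a general bounded-below operator: with $S=\mathrm{diag}(1,1/2)$ on $\bb C^2$ (an invertible near-isometry), $\cl M=\bb C^2$ and $\cl N=\mathrm{span}\{(1,1)\}$, one gets $S(\cl M\ominus\cl N)=\mathrm{span}\{(1,-1/2)\}$ while $S\cl M\ominus S\cl N=\mathrm{span}\{(1,-2)\}$. Because the $T_i$ are only near-isometries, you cannot appeal to preservation of inner products here.

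The identity can be rescued, but only by invoking the doubly commuting hypothesis explicitly: for $v\in\cl V_A$ one has $\langle T_A^{\bs r}v,T_A^{\bs r}T_{m+1}w\rangle=\langle T_A^{*\bs r}T_A^{\bs r}T_{m+1}^*v,w\rangle$ for all $w\in\cl V_A$ (since $T_{m+1}^*$ commutes with $T_i$ and $T_i^*$ for $i\in A$), and choosing $w=T_{m+1}^*v\in\cl V_A$ forces $\|T_A^{\bs r}T_{m+1}^*v\|^2=0$, hence $T_{m+1}^*v=0$; thus $T_A^{\bs r}\cl V_A\ominus T_A^{\bs r}T_{m+1}\cl V_A=T_A^{\bs r}(\cl V_A\cap\cl W_{m+1})$, which the argument of Proposition \ref{wdinv-prop}(ii) identifies with $T_A^{\bs r}(\cl V_A\ominus T_{m+1}\cl V_A)$. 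One must also check $\bigl(\bigcap_{\bs j}T^{\bs j}_{I_m\setminus A}\cl W_A\bigr)\cap\cl W_{m+1}=\bigcap_{\bs j}T^{\bs j}_{I_m\setminus A}(\cl W_A\cap\cl W_{m+1})$, again using injectivity and double commutativity. None of this appears in your proposal---you file it under ``careful bookkeeping'' with Lemma \ref{ot}---so as written the identification of $\cl K_A^s$ with the piece indexed by $A\cup\{m+1\}$ is unjustified. Either supply this argument or reorganize the inductive step as the paper does, decomposing $\cl W_A$ first and pushing outward.
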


\begin{proof} We will prove the result by induction on $m,$ and for this, we first prove the result for $m=2.$ Since $T_1$ is a near-isometry on $\cl H,$ therefore using Theorem 
\ref{wdinv}, we decompose $\cl H$ as 

\begin{equation}\label{wdinvsv1}
\cl H=\bigoplus_{m_1=0}^\infty T_1^{m_1}\cl W_1\bigoplus \bigcap_{m_1=0}^\infty T_1^{m_1}\cl H,
\end{equation}
where $\cl W_1=\cl H\ominus T_1\cl H, \ T_1 |_{\bigoplus_{m_1=0}^\infty T_1^{m_1}\cl W}$ 
is a shift, and $T_1|_{\bigcap_{m_1=1}^\infty T_1^{m_1}\cl H}$ is invertible. By Proposition \ref{wdinv-prop}, $T_2\cl W_1\subseteq \cl W_1$ and $T_2$ is a near-isometry on $\cl W_1,$ therefore applying  Theorem \ref{wdinv} for $\cl W_1$ we can write 
\begin{align}
\cl W_1&=\bigoplus_{m_2=0}^\infty T_2^{m_2}(\cl W_1\ominus T_2\cl W_1)\bigoplus \bigcap_{m_2=0}^\infty T_2^{m_2}\cl W_1\nonumber\\
&=\bigoplus_{m_2=0}^\infty T_2^{m_2}(\cl W_1\cap\cl W_2)\bigoplus \bigcap_{m_2=0}^\infty T_2^{m_2}\cl W_1\label{wdinvsv2}
\end{align}

Using the decomposition of $\cl W_1$ from equation (\ref{wdinvsv2}) in equation(\ref{wdinvsv1}), we obtain 

\begin{align}
\cl H &=\bigoplus_{m_1=0}^\infty T_1^{m_1}\left( \bigoplus_{m_2=0}^\infty T_2^{m_2}(\cl W_1\cap \cl W_2) \bigoplus 
\bigcap_{m_2=0}^\infty T_2^{m_2}\cl W_1\right)\bigoplus \bigcap_{m_1=0}^\infty T_1^{m_1} \cl H\nonumber\\
&=\bigoplus_{m_1,m_2=0}^\infty T_1^{m_1} T_2^{m_2}(\cl W_1\cap \cl W_2) \bigoplus 
\left(\bigoplus_{m_1=0}^\infty T_1^{m_1}\left(\bigcap_{m_2=0}^\infty  T_2^{m_2}\cl W_1\right)\right)\nonumber\\
& \quad \quad \bigoplus \bigcap_{m_1=0}^\infty T_1^{m_1} \cl H\label{wdinvsv8}
\end{align} 
The second equality above follows using Lemma \ref{ot}. Note that  $T_2$ is also a near-isometry on $\cl H,$ therefore using Theorem \ref{wdinv} again, we can decompose $\cl H$ as 
$$
\cl H = \bigoplus_{m_2=0}^\infty T_2^{m_2}\cl W_2 \bigoplus \bigcap_{m_2=0}^\infty T_2^{m_2}\cl H
$$
Then, using Lemma \ref{ot}, we get 
$$
T_1^{m_1}\cl H = \bigoplus_{m_2=0}^\infty T_1^{m_1}T_2^{m_2}\cl W_2 \bigoplus \bigcap_{m_2=0}^\infty T_1^{m_1}T_2^{m_2}\cl H.
$$
This yields 
\begin{align}
\bigcap_{m_1=0}^\infty T_1^{m_1}\cl H &=\bigcap_{m_1=0}^\infty \left(\bigoplus_{m_2=0}^\infty T_1^{m_1}T_2^{m_2}\cl W_2\right) 
\bigoplus \bigcap_{m_1=0}^\infty \left(\bigcap_{m_2=0}^\infty T_1^{m_1}T_2^{m_2}\cl H\right)\nonumber\\
&= \bigoplus_{m_2=0}^\infty T_2^{m_2}\left( \bigcap_{m_1=0}^\infty T_1^{m_1}\cl W_2 \right)
\bigoplus \bigcap_{m_1,m_2=0}^\infty  T_1^{m_1}T_2^{m_2}\cl H\label{wdinvsv3}
\end{align}

Using equation (\ref{wdinvsv3}) in equation (\ref{wdinvsv8}), we get 
\begin{align*}
\cl H &= \bigoplus_{m_1,m_2=0}^\infty T_1^{m_1} T_2^{m_2}(\cl W_1\cap \cl W_2) \bigoplus 
\left(\bigoplus_{m_1=0}^\infty T_1^{m_1}\left(\bigcap_{m_2=0}^\infty  T_2^{m_2}\cl W_1\right)\right)\\
& \quad \quad \bigoplus \left(\bigoplus_{m_2=0}^\infty T_2^{m_2}\left( \bigcap_{m_1=0}^\infty T_1^{m_1}\cl W_2\right) \right)
\bigoplus \bigcap_{m_1,m_2=0}^\infty  T_1^{m_1}T_2^{m_2}\cl H
\end{align*}

Thus, for $m=2,$ $\cl H$ is represented as a direct sum of closed subspaces where the summands in the above decomposition of $\cl H$ are precisely the same as the the ones written in equations (\ref{rep1}) and (\ref{empty}).  

Clearly, each summand $\cl H_A$ in the above decomposition of $\cl H$ is invariant under $T_1$ as well as $T_2.$ Thus, each summand reduces $T_1$ as well as $T_2.$ Now for $A=I_2, \ \cl H_A= \bigoplus\limits_{m_1,m_2=0}^\infty T_1^{m_1} T_2^{m_2}(\cl W_1\cap \cl W_2).$ 

But, 
\begin{align*}
\bigoplus_{m_1,m_2=0}^\infty T_1^{m_1} T_2^{m_2}(\cl W_1\cap \cl W_2)
&=\bigoplus_{m_1=0}^\infty T_1^{m_1}\left(\bigoplus_{m_2=0}^\infty T_2^{m_2}(\cl W_1\cap \cl W_2)\right)\\
&=\bigoplus_{m_2=0}^\infty T_2^{m_2}\left(\bigoplus_{m_1=0}^\infty T_1^{m_2} (\cl W_1\cap \cl W_2)\right)
\end{align*}
Therefore, $T_1$ and $T_2$ both are shifts on $\cl H_A.$  

Now consider $A=\{1\}.$ Then $\cl H_A=\bigoplus\limits_{m_1=0}^\infty T_1^{m_1}\left(\bigcap\limits_{m_2=0}^\infty  T_2^{m_2}\cl W_1\right).$ Then clearly $T_1$ is a shift on $\cl H_A.$ We already have that $\cl H_A$ is invariant under $T_2$ and $T_2$ is one-to-one. Also, $T_2\left( \bigcap\limits_{m_2=0}^\infty  T_2^{m_2}\cl W_1 \right)= \bigcap\limits_{m_2=0}^\infty  T_2^{m_2}\cl W_1$ which 
implies that $T_2(\cl H_A)=\cl H_A.$ Thus, $T_2$ is invertible on $\cl H_A.$  

\vspace{.2 cm}

Similarly when $A=\{2\}, \ T_1$ is invertible and $T_2$ is a shift on $\cl H_A= \bigoplus\limits_{m_2=0}^\infty T_2^{m_2}\left( \bigcap\limits_{m_1=0}^\infty T_1^{m_1}\cl W_2 \right).$  

Lastly, when $A$ is empty, $\cl H_A=\bigcap\limits_{m_1,m_2=0}^\infty  T_1^{m_1}T_2^{m_2}\cl H$ in which case, using the arguments similar as above, it can be easily seen that $T_1$ and $T_2$ are both invertible on $\cl H_A.$  

Hence, the result is true for $m=2.$ Now suppose the result is true for $m=r$ where $r+1\le k.$ Then
\begin{equation}\label{wdinvsv3*}
\cl H=\bigoplus_{A\subseteq I_r} \cl H_A,
\end{equation}
where for each non-empty $A\subseteq I_r,$
\begin{equation}\label{wdinvsv4}
\cl H_A= \bigoplus_{{\bs l}\in \bb N_0^A} T_A^{\bs l}\left(\bigcap_{{\bs j}\in \bb N_0^{I_r\setminus A}} T_{I_r\setminus A}^{\bs j}\cl W_A\right)
\end{equation}
and for $A=\emptyset$
\begin{equation}\label{wdinvsv5}
\cl H_A=\bigcap_{{\bs l}\in \bb N_0^r}T_{I_r}^{\bs l}(\cl H). 
\end{equation}
Let $A$ be a non-empty subset of $I_r.$ Since $T_{r+1}$ is a near-isometry on $\cl W_A,$ therefore using Theorem \ref{wdinv}, we can write 
$$
\cl W_A=\bigoplus_{m_{r+1}=0}^\infty T_{r+1}^{m_{r+1}}\left(\cl W_A\cap \cl W_{r+1}\right)
\bigoplus \bigcap_{m_{r+1}=0}^\infty T_{r+1}^{m_{r+1}}\cl W_A
$$
Using this representation of $\cl W_A$ in equation (\ref{wdinvsv4}) and Lemma \ref{ot}, we get 
\begin{align}\label{wdinvsv5*}
  \cl H_A  
&= \bigoplus_{{\bs l}\in \bb N_0^A, m_{r+1}\in \bb N_0} T^{\bs l}_A T^{m_{r+1}}_{r+1}\left(\bigcap_{{\bs j}\in \bb N_0^{I_r\setminus A}}T^{\bs j}_{I_r\setminus A}\left(\bigcap_{i\in A\cup\{r+1\}}\cl W_{i}\right)\right)\nonumber\\ 
& \quad \quad \bigoplus\left( \bigoplus_{{\bs l}\in \bb N_0^A}T^{\bs l}_A\left(\bigcap\limits_{\bs j\in \bb N_0^{I_r\setminus A}}\bigcap\limits_{m_{r+1}=0}^\infty T^{\bs j}_{I_r\setminus A}T_{r+1}^{m_{r+1}}\cl W_A\right)\right).
\end{align}
Further, when $A=\emptyset,$

\begin{equation}\label{wdinvsv6}
\cl H_A=\bigcap_{{\bs l}\in \bb N_0^r} T^{\bs l}_{I_r}\cl H 
\end{equation} 
Now applying Theorem \ref{wdinv} for the near-isometry $T_{r+1}$ on $\cl H,$ we have 
$$
 \cl H = \bigoplus_{m_{r+1}=0}^\infty T_{r+1}^{m_{r+1}}\cl W_{r+1} \bigoplus \bigcap_{m_{r+1}=0}^\infty \cl H
 $$
Using this representation of $\cl H$ in equation (\ref{wdinvsv6}), we get 
\begin{align}\label{wdinvsv7}
\cl H_A &= \bigcap_{{\bs l}\in \bb N_0^r} T_{I_r}^{\bs l} \left( \bigoplus_{m_{r+1}=0}^\infty T_{r+1}^{m_{r+1}}\cl W_{r+1} 
\bigoplus \bigcap_{m_{r+1}=0}^\infty \cl H \right)\nonumber\\
& =  \bigcap_{{\bs l}\in \bb N_0^r} \left( \bigoplus_{m_{r+1}=0}^\infty  T_{I_r}^{\bs l} T_{r+1}^{m_{r+1}}\cl W_{r+1} \bigoplus \bigcap_{m_{r+1}=0}^\infty T_{I_r}^{\bs l} T_{r+1}^{m_{r+1}}\cl H \right)\nonumber\\
& =  \bigoplus_{m_{r+1}=0}^\infty T_{r+1}^{m_{r+1}}\left( \bigcap_{{\bs l}\in \bb N_0^r} T_{I_r}^{\bs l} \cl W_{r+1}\right) \bigoplus \bigcap_{{\bs l}\in \bb N_0^{I_{r+1}}} T_{I_{r+1}}^{\bs l} \cl H
\end{align}

Using equations (\ref{wdinvsv5*}) and (\ref{wdinvsv7}) in equation (\ref{wdinvsv3*}), we see that $\cl H=\bigoplus\limits_{A\subseteq I_{r+1}}\cl H_A,$ where the summands $\cl H_A$ are same as in equations (\ref{rep1}) and (\ref{empty}) for each subset $A$ of $I_{r+1.}$ Also, by similar arguments as were used in the case for $m=2,$ it can be seen that $T_i|_{\cl H_A}$ is shift whenever $i\in A$ 
and is invertible whenever $i\in I_{r+1}\setminus A.$ This completes the proof.
\end{proof}

\begin{remark} If the operators $T_1,\dots, T_k$ in Theorem \ref{wdinvsv} are assumed to be isometries, then Theorem \ref{wdinvsv} reduces to Theorem 3.2. Hence, Sarkar's decomposition (Theorem 3.2) is a special case of our decomposition; thereby Slocinski's decomposition \cite{Slo} also turns out to be a special case of our decomposition. 
\end{remark}

\section{de Branges' theorem for doubly commuting $n$-tuples of near-isometries}

In this section, we give a representation of sub-Hardy Hilbert spaces over the polydisc $\bb D^n$ which are invariant under any $k$-tuple of operators of multiplication with finite Blaschke products that are near-isometries and doubly commute on the sub-Hardy Hilbert space. We first get a representation when the involved multiplication operators are assumed to be isometries, and later use it to work with the general case.

Let $B_1, \dots, B_n$ be finite Blaschke products with degrees $r_1,\dots, r_n$, respectively. We know from \cite{ST} that each finite Blaschke product give rise to an orthonormal basis for $H^2(\bb D).$ We first recall this construction of basis for $H^2(\bb D)$ generated using the finite Blaschke $B_i.$ Let 
$$
B_i(z)=\prod_{l=1}^{r_i}\frac{z-\alpha_l^i}{1-\overline{\alpha_l^i}z}
$$
For $1\le j\le r_i,$ define 
$$
k_j^i(z)=\frac{1}{1-\overline{\alpha_j^i}z}, \quad  \widehat{k_j^i}(z)=\frac{\sqrt{1-|\alpha_j^i|^2}}{1-\overline{\alpha_j^i}z}, \quad and \quad B_j^i(z)= \prod_{l=1}^{j}\frac{z-\alpha_l^i}{1-\overline{\alpha_l^i}z}.
$$
Further, define 
\begin{equation}\label{onb}
e_{jm}^i=\widehat{k^i_{j+1}} B_j^i B_i^m,
\end{equation}
for $0\le j\le r_i-1$ and $m\ge 0,$ where $B_0^i(z)=1$

Then $\{e_{jm}^i: 0\le j\le r_i-1, \ m\ge 0 \}$ forms an orthonormal set in $H^2(\bb D)$ which becomes an orthonormal basis for $H^2(\bb D)$ if we assume $\alpha_1^i=0.$ 

From this point onwards, $B_1,\dots, B_n$ are fixed finite Blaschke products with $r_1,\dots, r_n$ factors, respectively. For each $i,$ let $T_{B_i}:H^p(\bb D^n)\to H^p(\bb D^n)$ denote the isometry 
$$
T_{B_i}f(z_1,\dots,z_n)=B_i(z_i)f(z_1,\dots,z_n).
$$
 
For any automorphism $\phi$ of unit disc and any fixed $1\le i\le n$, if we let $\phi_i:\bb D^n\to \bb D^n$ denote the map  $\phi_i(z_1,\dots,z_n)=(z_1,\dots,z_{i-1},\phi(z_i),z_{i+1},\dots,z_n),$ then the composition operator induced by $\phi_i$ on the Hardy space $H^p(\bb D^n), \ 1\le p\le \infty$ is well-defined and one-to-one. Therefore, in our investigation of representation of a Hilbert space that is a vector subspaces of a Hardy space $H^p(\bb D^n)$ on which the tuple $(T_{B_1},\dots,T_{B_n})$ doubly commute and each $T_{B_i}$ is a well-defined near-isometry, we can without loss of any generality assume that $\alpha_1^i=0$ for each $1\le i\le n.$ 

Then, for every Blaschke product $B_i,$ the set $\{e_{jm}^i: 0\le j\le r_i-1, \ m\ge 0 \}$ is an orthonormal basis for $H^2(\bb D).$ 
Now for ${\bs j}=(j_1,\dots, j_n)$ with $0\le j_i\le r_i-1$ and ${\bs m}=(m_1,\dots, m_n)\in \bb N_0^n$, define 
$$
e_{{\bs j}{\bs m}}(z_1,\cdots,z_n) = e^1_{j_1m_1}(z_1)\cdots e^n_{j_nm_n}(z_n).
$$
Then, using the identification of $H^2(\bb D^n)$ with 
$H^2(\bb D)\otimes \cdots \otimes H^2(\bb D),$  $\{e_{{\bs j}{\bs m}}: 0\le j_i\le r_i-1, {\bs m} \in \bb N_0^n\}$ forms an orthonormal basis for $H^2(\bb D^n)$ and 
\begin{equation}\label{ST7}
H^2(\bb D^n) = \bigoplus_{j_1,\dots,j_n=0}^{r_1-1,\dots,r_n-1}e_{j_10}^1\cdots e_{j_n0}^n H^2(B_1,\cdots B_n) 
\end{equation}
where $H^2(B_1,\dots,B_n)$ is the closed linear span of $\{B_1^{m_1}\cdots B_n^{m_n}:m_i\ge 0\}$ in 
$H^2(\bb D^n).$ Following the terminology of 
\cite{ST}, we call a scalar $\alpha_{{\bs j}{\bs m}}$ the $({\bs j},{\bs m})^{th} \ B-$Fourier coefficient of $f\in H^2(\bb D^n)$ if $\alpha_{{\bs j}{\bs m}}=\langle{f,e_{{\bs j}{\bs m}}}\rangle.$ 

\begin{lemma}\label{STL1} Let $\phi\in H^\infty(\bb D^n).$ Then each $\phi_{\bs i}\in H^\infty(\bb D^n),$ where 
$\phi_{\bs i}'s$ are unique functions in 
$H^2(B_1,\cdots, B_n)$ such that 
$$
\phi=\sum\limits_{\substack{{\bs i}=(i_1,\cdots,i_n)\\i_1,\cdots, i_n=0}}^{r_1-1,\cdots, r_n-1 } e_{i_10}^1\cdots e_{i_n0}^n \phi_{\bs i}.
$$ 
\end{lemma}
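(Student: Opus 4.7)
The existence and uniqueness of the representation with coefficients $\phi_{\bs i}\in H^2(B_1,\ldots,B_n)$ is immediate from the orthogonal decomposition (\ref{ST7}), since $H^\infty(\bb D^n)\subset H^2(\bb D^n)$. The substance of the lemma is to upgrade each $\phi_{\bs i}$ from $H^2$ to $H^\infty$, and my plan is to convert this into a uniform linear-algebra estimate on the distinguished boundary $\bb T^n$.

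Because each finite Blaschke product $B_s$ preserves normalized Lebesgue measure on $\bb T$, the composition map $g\mapsto g\circ B_s$ is an isometry of $H^2(\bb D)$ onto $H^2(B_s)$; and since $B_s\colon\bb D\to\bb D$ is surjective, this isometry restricts to a bijection between $H^\infty(\bb D)$ and $H^\infty(\bb D)\cap H^2(B_s)$. Applying this coordinatewise with $\bs B(z_1,\ldots,z_n):=(B_1(z_1),\ldots,B_n(z_n))$, every $\phi_{\bs i}\in H^2(B_1,\ldots,B_n)$ has the form $\phi_{\bs i}=g_{\bs i}\circ\bs B$ for a unique $g_{\bs i}\in H^2(\bb D^n)$, with $\phi_{\bs i}\in H^\infty(\bb D^n)$ if and only if $g_{\bs i}\in H^\infty(\bb D^n)$. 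Thus it suffices to show $g_{\bs i}\in L^\infty(\bb T^n)$ for each $\bs i$.

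For a.e.\ $\bs w=(w_1,\ldots,w_n)\in\bb T^n$, the preimage $\bs B^{-1}(\bs w)\cap\bb T^n$ consists of $R:=r_1\cdots r_n$ distinct points $\bs\zeta_{\bs k}(\bs w)=(\zeta^1_{k_1}(w_1),\ldots,\zeta^n_{k_n}(w_n))$, because each $B_s|_{\bb T}$ is a smooth $r_s$-fold covering of $\bb T$ (its boundary derivative $|B_s'(\zeta)|=\sum_l(1-|\alpha^s_l|^2)/|1-\overline{\alpha^s_l}\zeta|^2$ is strictly positive on $\bb T$). Evaluating the identity $\phi(z)=\sum_{\bs i}\bigl(\prod_s e^s_{i_s0}(z_s)\bigr)\,g_{\bs i}(\bs B(z))$ at these $R$ preimages yields an $R\times R$ linear system for the unknowns $(g_{\bs i}(\bs w))_{\bs i}$ whose coefficient matrix is the Kronecker product $E(\bs w)=E^1(w_1)\otimes\cdots\otimes E^n(w_n)$, where $E^s(w_s)=[e^s_{i_s0}(\zeta^s_{k_s}(w_s))]_{k_s,i_s}$.

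The heart of the argument is a uniform bound on $\|E^s(w_s)^{-1}\|$. Using the closed form $k_{K_{B_s}}(z,\zeta)=(1-\overline{B_s(\zeta)}B_s(z))/(1-\bar\zeta z)$ for the reproducing kernel of the model space $K_{B_s}=H^2(\bb D)\ominus B_sH^2(\bb D)$, one checks that the Gram matrix $E^s(w_s)E^s(w_s)^*$ has off-diagonal entries $k_{K_{B_s}}(\zeta^s_{k_s},\zeta^s_{k_s'})=(1-|w_s|^2)/(1-\overline{\zeta^s_{k_s'}}\zeta^s_{k_s})=0$ on $\bb T$ and diagonal entries $\sum_j|e^s_{j0}(\zeta^s_{k_s})|^2=\sum_l P_{\alpha^s_l}(\zeta^s_{k_s})=|B_s'(\zeta^s_{k_s})|$, which lie in the positive interval $[\min_{\bb T}|B_s'|,\max_{\bb T}|B_s'|]$. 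Therefore $\|E(\bs w)^{-1}\|\le\prod_s(\min_{\bb T}|B_s'|)^{-1/2}$ uniformly on $\bb T^n$, and solving the system gives $|g_{\bs i}(\bs w)|\le C\|\phi\|_\infty$ a.e.\ on $\bb T^n$. Since $H^2(\bb D^n)\cap L^\infty(\bb T^n)=H^\infty(\bb D^n)$ by Poisson integration, we conclude $g_{\bs i}\in H^\infty(\bb D^n)$, whence $\phi_{\bs i}=g_{\bs i}\circ\bs B\in H^\infty(\bb D^n)$. The principal technical obstacle is the boundary Gram-matrix computation yielding diagonal $=|B_s'|$ and off-diagonal $=0$; once this one-variable identity is in hand, the Kronecker-product structure of $E(\bs w)$ automatically lifts the estimate from one to $n$ variables.
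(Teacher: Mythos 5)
Your argument is correct, but it is a genuinely different proof from the one in the paper. The paper argues softly in $L^2$: for $f\in H^2(B_1,\dots,B_n)$ with partial sums $f_{k_1\dots k_n}$, the orthogonality of the summands in the decomposition (\ref{ST7}) gives $\|\phi_{\bs i}f_{k_1\dots k_n}\|^2\le\sum_{\bs j}\|e_{j_10}^1\cdots e_{j_n0}^n\phi_{\bs j}f_{k_1\dots k_n}\|^2=\|\phi f_{k_1\dots k_n}\|^2$, so $\{\phi_{\bs i}f_{k_1\dots k_n}\}$ is Cauchy and (by pointwise convergence) converges to $\phi_{\bs i}f$; thus $\phi_{\bs i}$ is a bounded multiplier of $H^2(B_1,\dots,B_n)$ into $H^2(\bb D^n)$, hence of $H^2(\bb D^n)$ into itself, hence lies in $H^\infty(\bb D^n)$. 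You instead work pointwise on the distinguished boundary, writing $\phi_{\bs i}=g_{\bs i}\circ\bs B$ and inverting the $R\times R$ system obtained by evaluating the decomposition over the fiber $\bs B^{-1}(\bs w)$; the key computation that $E^s(w)E^s(w)^*$ is diagonal with entries $|B_s'(\zeta)|$ is correct (it is exactly the Takenaka--Malmquist reproducing-kernel identity $\sum_j e_{j0}(\zeta)\overline{e_{j0}(\zeta')}=(1-\overline{B(\zeta')}B(\zeta))/(1-\overline{\zeta'}\zeta)$ on $\bb T$), and the Kronecker structure does lift the bound to $n$ variables. Your route is longer but buys an explicit uniform estimate $\|\phi_{\bs i}\|_\infty\le C(B_1,\dots,B_n)\|\phi\|_\infty$ with an identified constant, whereas the paper's route avoids boundary values entirely. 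Two points you leave tacit and should record: measure-preservation of $g\mapsto g\circ B_s$ (hence the isometry onto $H^2(B_s)$) requires $B_s(0)=0$, which holds only because of the paper's standing normalization $\alpha_1^s=0$; and the a.e.\ identity on $\bb T^n$ must be shown to hold simultaneously at all $R$ points of the fiber of a.e.\ $\bs w$, which follows because each $B_s|_{\bb T}$ is a smooth covering and therefore maps null sets to null sets.
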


\begin{proof} To prove the result, it is enough to show that each $\phi_i$ multiplies $H^2(B_1,\dots, B_n)$ into $H^2(\bb D^n)$. For this, we take $f\in H^2(B_1,\dots,B_n),$ and let $f=\sum\limits_{i_1,\dots,i_n =0}^\infty \alpha_{i_1\cdots i_n}B_1^{i_1}\cdots B_n^{i_n}.$ Suppose  
$f_{k_1\dots k_n}=\sum\limits_{i_1,\dots,i_n= 0}^{k_1,\cdots, k_n} \alpha_{i_1\cdots i_n}B_1^{i_1}\cdots B_n^{i_n}.$ Then 
$f_{k_1\dots k_n}$ converges to $f$ in $H^2(\bb D^n).$ Now consider,
\begin{align}
||\phi_{\bs i} f_{k_1\dots k_n}||^2 &\le \sum\limits_{\substack{{\bs j}=(j_1,\dots, j_n)\\ j_1,\dots, j_n= 0}}^{r_1-1,\dots, r_n-1}||\phi_{\bs j} f_{k_1\dots k_n}||^2\nonumber\\
&= \sum\limits_{\substack{{\bs j}=(j_1,\dots, j_n)\\ j_1,\dots, j_n=0}}^{r_1-1,\dots, r_n-1}||e_{j_10}^1\cdots e_{j_n0}^n\phi_{\bs j} f_{k_1\dots k_n}||^2\nonumber\\
&= \|\phi f_{k_1\dots k_n} \|^2\label{ST8}
\end{align}

Since $\{f_{k_1\dots k_n}\}$ is Cauchy in $H^2(\bb D^n)$ and $\phi\in H^\infty(\bb D^n), \  \{\phi f_{k_1\dots k_n}\}$ is also Cauchy in 
$H^2(\bb D^n).$ This, using inequality (\ref{ST8}), implies that $\{\phi_i f_{k_1\dots k_n}\}$ is Cauchy, hence converges to some 
$h\in H^2(\bb D^n).$ But, convergence in $H^2(\bb D^n)$ implies pointwise convergence; therefore $\phi_i f = h.$ Thus, we conclude that $\phi_i$ multiplies 
$H^2(B_1,\dots, B_n)$ into $H^2(\bb D^n).$ This, using equation (\ref{ST7}) together with the fact that each 
$e_{j_10}^1\cdots e_{j_n0}^n$ is in $H^\infty(\bb D^n),$ establishes that $\phi_i$ multiplies the entire $H^2(\bb D^n)$ into itself. Hence, $\phi_i$ belongs to $H^\infty(\bb D^n.)$    
\end{proof}

From this point onwards, we will work with the $n$-tuple $(T_{B_1},\dots, T_{B_n})$ only and, for notational convenience, we will use $T_i$ in place of $T_{B_i}$ in all our results. The following is an extension of Theorem 4.1 from \cite{ST}. 

\begin{thm}\label{STThm} Let $\cl M$ be a non-zero Hilbert space that is a vector subspace of $H^p(\bb D^n)$ for some $1\le p\le 2,$ and let 
$\cl M$ be boundedly contained in $H^p(\bb D^n)$ whenever $1\le p<2.$ Suppose the operators $T_1,\dots,T_n$ satisfy the following conditions on $\cl M$:
\begin{enumerate}[(i)]
\item $T_j(\cl M)\subseteq \cl M$ for each $j=1,\dots, n;$
\item $T_j$ is an isometry on $\cl M$ for each $j=1,\dots, n;$
\item $T_jT_m^*=T_m^*T_j$ for every $j\ne  m,$ where the adjoint is with respect the inner product on $\cl M.$
\end{enumerate}

Then there exits an orthonormal set $\{\phi_1,\dots,\phi_r\}$ in $\cl M$ with $r\le r_1\cdots r_n$ consisting of $H^{\frac{2p}{2-p}}(\bb D^n)$ 
functions ($\frac{2p}{2-p}$ means $\infty$ when $p=2$) 
such that
\begin{equation}\label{rep3}
\cl M=\phi_1H^2(B_1,\dots,B_n)\oplus \cdots\oplus \phi_r H^2(B_1,\dots,B_n)
\end{equation} and 
\begin{equation}\label{rep4}
||\phi_1f_1+\cdots+\phi_rf_r||_{\cl M}^2=||f_1||_2^2+\cdots+||f_r||_2^2
\end{equation}
whenever $f_1,\dots,f_r\in H^2(B_1,\dots,B_n).$ 
\end{thm}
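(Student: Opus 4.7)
The plan is to extract a Wold-type decomposition from the isometric tuple, identify each summand as a twisted copy of $H^2(B_1,\ldots,B_n)$, and conclude with a module-rank comparison against the ambient Hardy space.

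Since each $T_i$ is an isometry on $\cl M$ and the $T_i$'s doubly commute, they form a doubly commuting $n$-tuple of near-isometries, and Theorem \ref{wdinvsv} applied with $m=n$ yields
\[
\cl M=\bigoplus_{A\subseteq I_n}\cl M_A,
\]
with $T_i|_{\cl M_A}$ a shift for $i\in A$ and invertible for $i\in I_n\setminus A$. I would next show $\cl M_A=\{0\}$ whenever $A\ne I_n$: fix $i\in I_n\setminus A$, and note that invertibility of $T_i$ on $\cl M_A$ lets us write any $f\in\cl M_A$ as $f=B_i^k g_k$ with $g_k\in\cl M_A\subseteq H^p(\bb D^n)$ for every $k\ge 1$. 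Since $B_i(z_i)$ has a zero at $\alpha_1^i=0$, the Taylor expansion of $f$ in $z_i$ around $0$ (other coordinates frozen) vanishes to all orders, and analyticity forces $f\equiv 0$. Hence $\cl M=\cl M_{I_n}=\bigoplus_{\bs k\in\bb N_0^n}T^{\bs k}\cl W$ with $\cl W=\bigcap_{i=1}^n\cl W_i$.

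Let $\{\phi_j\}_{j=1}^r$ be an orthonormal basis of $\cl W$. The isometry of each $T_i$ on $\cl M$ together with the Wold orthogonality of $\{T^{\bs m}\phi_j\}_{\bs m}$ gives, for $f=\sum_{\bs m}c_{\bs m}B^{\bs m}\in H^2(B_1,\ldots,B_n)$,
\[
\|\phi_j f\|_{\cl M}^2=\Big\|\sum_{\bs m}c_{\bs m}T^{\bs m}\phi_j\Big\|_{\cl M}^2=\sum_{\bs m}|c_{\bs m}|^2=\|f\|_2^2.
\]
Orthogonality of the $\phi_j$'s in $\cl M$ extends to orthogonality of the subspaces $\phi_j H^2(B_1,\ldots,B_n)$, and the Wold expansion of $\cl M$ rewrites as $\cl M=\bigoplus_j\phi_j H^2(B_1,\ldots,B_n)$ with the norm identity (\ref{rep4}), which is precisely (\ref{rep3}).

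To prove $\phi_j\in H^{2p/(2-p)}(\bb D^n)$, I would observe that $\phi_j f\in \cl M\subseteq H^p(\bb D^n)$ for every $f\in H^2(B_1,\ldots,B_n)$. Composing with the isometric isomorphism $\Phi_{\bs B}\colon H^2(\bb D^n)\to H^2(B_1,\ldots,B_n)$, $h\mapsto h\circ\bs B$ (isometric because $\alpha_1^i=0$ makes the monomials $\{B^{\bs k}\}$ an orthonormal basis of the target), and unfolding via (\ref{ST7}), one obtains the set-theoretic inclusion $\phi_j H^2(\bb D^n)\subseteq H^p(\bb D^n)$; the closed-graph theorem then renders $M_{\phi_j}\colon H^2(\bb D^n)\to H^p(\bb D^n)$ bounded (the bounded containment hypothesis for $1\le p<2$ supplies a direct constant $C$ via the identity above). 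The standard H\"older-and-duality characterization of bounded $L^2\to L^p$ multipliers, combined with the fact that the coordinatewise Blaschke map preserves normalized Lebesgue measure on $\bb T^n$, places $\phi_j$ in $L^{2p/(2-p)}(\bb T^n)$, and analyticity upgrades this to $H^{2p/(2-p)}(\bb D^n)$, giving $H^\infty(\bb D^n)$ when $p=2$. Finally, to see $r\le r_1\cdots r_n$, I would view $\cl M$ as generated over $\bb C[B_1,\ldots,B_n]$ by $\phi_1,\ldots,\phi_r$, which are linearly independent over $\bb C[B_1,\ldots,B_n]$ by the direct-sum structure; the ambient decomposition (\ref{ST7}) exhibits $H^2(\bb D^n)$ as a module of rank $r_1\cdots r_n$ over the same ring, and a rank comparison via the inclusion $\cl M\hookrightarrow H^p(\bb D^n)$, using the $H^{2p/(2-p)}$-regularity of the $\phi_j$ just established, forces $r\le r_1\cdots r_n$.

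The main obstacle is the $H^{2p/(2-p)}$-regularity step: the natural multiplier domain $H^2(B_1,\ldots,B_n)$ is a proper subspace of $H^2(\bb D^n)$, so lifting the multiplier estimate to the full $H^2(\bb D^n)$ demands the Blaschke-composition isometry $\Phi_{\bs B}$ together with a careful change of variables on $\bb T^n$. The rank argument in the final dimension step is also delicate, since (\ref{ST7}) is orthogonal only for $p=2$; for $1\le p<2$ one must combine the $H^\infty$-type components provided by Lemma \ref{STL1} with the $H^{2p/(2-p)}$-regularity just established to make the module comparison rigorous.
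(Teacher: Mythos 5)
Your overall architecture coincides with the paper's: apply Theorem \ref{wdinvsv} to the isometric tuple, observe that every summand $\cl{M}_A$ with $A\ne I_n$ vanishes because $B_i(0)=0$ and the elements of $\cl{M}$ are analytic, take an orthonormal basis of the joint wandering subspace $\cl{W}_{I_n}$, and read off the norm identity from the Wold orthogonality. However, two of your steps have genuine gaps. First, you pass from $\cl{M}=\bigoplus_{\bs m}T^{\bs m}\cl{W}_{I_n}$ to $\cl{M}=\bigoplus_j\phi_jH^2(B_1,\dots,B_n)$ without showing that the abstract limit $h\in\cl{M}$ of the Cauchy sequence of partial sums $\phi_jf_{k_1\cdots k_n}$ is the pointwise product $\phi_jf$. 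This identification is exactly where the hypotheses are consumed: for $p=2$ the paper computes the $B$-Fourier coefficients of $h$ against the basis $e_{{\bs i}{\bs k}}$ of $H^2(\bb D^n)$ (Lemma \ref{STL2}), and for $1\le p<2$ it uses the bounded containment to upgrade $\cl{M}$-convergence to $H^p$-convergence and hence to pointwise convergence. Without this step, equality (\ref{rep3}) --- which is an equality of sets of functions, not just an abstract isomorphism --- is not established, and neither is the claim $\phi_jH^2(B_1,\dots,B_n)\subseteq\cl{M}$ on which your regularity argument rests.

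Second, and more seriously, your dimension bound does not go through as stated for $1\le p<2$. The rank comparison requires the component functions $\phi^j_{\bs i}$ of $\phi_j$ in the decomposition (\ref{ST7}) to be multipliable against one another, since the relevant minors must be well-defined elements of $H^2(B_1,\dots,B_n)$; this is supplied by Lemma \ref{STL1} when the $\phi_j$ lie in $H^\infty(\bb D^n)$, i.e., when $p=2$, but for $p<2$ you only know $\phi_j\in H^{2p/(2-p)}(\bb D^n)$ and the products need not make sense. You flag this difficulty but do not resolve it. The paper resolves it by a different device: it forms the auxiliary Hilbert space $\cl{X}=\cl{M}\cap H^2(\bb D^n)$ with norm $\|x\|^2=\|x\|_{\cl M}^2+\|x\|_2^2$, applies the $p=2$ case to $\cl{X}$ to bound the dimension of its joint wandering subspace $\cl{N}$, and then shows that a linearly independent set in $\cl{W}_{I_n}$ has linearly independent ``constant components'' in $\cl{N}$, whence $\dim\cl{W}_{I_n}\le\dim\cl{N}\le r_1\cdots r_n$. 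Even in the $p=2$ case your independence statement is over the wrong ring: independence over $\bb C[B_1,\dots,B_n]$ of $N+1$ elements of a module of rank $N$ over the larger ring of bounded $B$-periodic functions is not by itself a contradiction. What actually closes the argument is Lemma \ref{STL3}'s descending determinant induction, which invokes the orthogonality $\phi_iH^2(B_1,\dots,B_n)\perp\phi_jH^2(B_1,\dots,B_n)$ in $\cl{M}$ at every stage to force successive minors to vanish.
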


The following two lemmas are essentially part of the proof of Theorem \ref{STThm}, but we are proving them separately for reader's convenience. 
We note in advance that the idea of the proof of Lemma \ref{STL2} for the case $p=2$ is same as Lemma 4.2 from \cite{ST}. Also, the proof of Lemma \ref{STL3} 
is essentially a careful extension of the basic idea used in the proof of Lemma 4.3 from \cite{ST} to the several variable situation. We still present these proofs below for completeness.

\begin{lemma}\label{STL2} Let $\cl M$ be a non-zero Hilbert space. Suppose $\cl M$ and the 
operators $T_1,\dots, T_n$ satisfy the conditions of Theorem \ref{STThm}. If $\phi \in \cl M$ such that 
$\{\phi B_1^{m_1}\cdots B_n^{m_n} : m_1,\dots, m_n\ge 0\}$ is an orthonormal set in $\cl M,$ then
\begin{enumerate}
\item[(i)] $\phi H^2(B_1,\dots, B_n)\subseteq \cl M;$
\item[(ii)] $\phi \in H^\frac{2p}{2-p}(\bb D^n),$ where $\frac{2p}{2-p}$ means $\infty$ for $p=2;$
\item[(iii)] $||\phi f||_{\cl M}=||f||_2$ for all $f\in H^2(B_1,\dots, B_n).$
\end{enumerate}
\end{lemma}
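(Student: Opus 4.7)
The plan is to: (a) build a linear isometry $f\mapsto g_f$ from $H^2(B_1,\dots,B_n)$ into $\cl M$ via Fourier partial sums; (b) identify $g_f$ with the pointwise product $\phi f$; and (c) upgrade $\phi\in H^p$ to $\phi\in H^{2p/(2-p)}$ by a H\"older-duality argument.

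For (a), given $f=\sum_{\bs m}a_{\bs m}B_1^{m_1}\cdots B_n^{m_n}\in H^2(B_1,\dots,B_n)$ with partial sums $f_N$, the $T_i$-invariance of $\cl M$ together with $\phi\in\cl M$ yields $\phi f_N=\sum_{|\bs m|\le N}a_{\bs m}\,T_1^{m_1}\cdots T_n^{m_n}\phi\in\cl M$. The orthonormality of $\{\phi B^{\bs m}\}$ in $\cl M$ then gives
\[
\|\phi f_N-\phi f_K\|_{\cl M}^2=\sum_{N<|\bs m|\le K}|a_{\bs m}|^2,
\]
so $\{\phi f_N\}$ is $\cl M$-Cauchy and converges to some $g_f\in\cl M$ with $\|g_f\|_{\cl M}=\|f\|_2$.

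For (b), I identify $g_f$ with the pointwise product $\phi f$ (as analytic functions on $\bb D^n$, equivalently a.e.\ on $\bb T^n$). When $1\le p<2$, the bounded containment $\cl M\hookrightarrow H^p(\bb D^n)$ converts $\cl M$-convergence into $L^p$-convergence, giving a.e.\ convergence along a subsequence on $\bb T^n$; matching this with the a.e.~limit of $\phi f_N\to\phi f$ coming from $f_N\to f$ in $L^2$ and the uniqueness of a.e.\ limits identifies $g_f=\phi f$. When $p=2$, I proceed as in \cite[Lemma 4.2]{ST}: the $H^2$-convergence $f_N\to f$ produces uniform convergence of $\phi f_N\to\phi f$ on compact subsets of $\bb D^n$, and one exploits the rigidity inherent in $\cl M$ being an isometrically $T_{B_i}$-invariant Hilbert subspace of $H^2(\bb D^n)$ to pass the $\cl M$-convergence to the same pointwise limit, thereby matching the two analytic functions. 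Together with (a) this establishes (i) and (iii).

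For (c), the estimate $\|\phi f\|_p\le C\|f\|_2$ for $f\in H^2(B_1,\dots,B_n)$ follows from (iii) combined with the containment of $\cl M$ in $H^p(\bb D^n)$ (the hypothesis for $p<2$, and for $p=2$ obtained by a closed-graph argument on the multiplication map $f\mapsto\phi f$ from $H^2(B_1,\dots,B_n)$ into $H^2(\bb D^n)$). Using the decomposition \eqref{ST7} together with the fact that each $e^1_{j_1 0}\cdots e^n_{j_n 0}\in H^\infty(\bb D^n)$, this bound extends to $\|\phi h\|_p\le C'\|h\|_2$ for every $h\in H^2(\bb D^n)$. The standard converse to H\"older's inequality then forces $\phi\in L^{2p/(2-p)}(\bb T^n)$, and since $\phi\in H^p(\bb D^n)$, conclusion (ii) follows. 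The main obstacle is step (b) in the $p=2$ case, where bounded containment of $\cl M$ in $H^2(\bb D^n)$ is not available and one must rely on the operator-theoretic rigidity of the isometric $T_{B_i}$-action, together with analyticity on $\bb D^n$, to transfer $\cl M$-convergence to pointwise convergence.
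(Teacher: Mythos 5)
Your step (a), the $1\le p<2$ half of step (b), and step (c) all line up with the paper's own proof (the paper is terser in (c), but the closed-graph-plus-converse-H\"older reasoning you supply is what is implicitly meant there). The genuine gap is exactly where you flag it: the identification of the $\cl M$-limit $h$ with $\phi f$ when $p=2$. Your proposed mechanism --- ``pass the $\cl M$-convergence to the same pointwise limit'' --- is not available, and no appeal to ``rigidity'' repairs it in that form: with only algebraic containment $\cl M\subseteq H^2(\bb D^n)$, convergence in the norm of $\cl M$ carries no pointwise (or $H^2$, or locally uniform) information at all, so you cannot match the $\cl M$-limit of $\phi f_N$ with the locally uniform limit $\phi f$ by comparing two limits of the same sequence in two topologies. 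That comparison is precisely what the bounded containment buys you when $p<2$, and it is absent when $p=2$.

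The paper's actual argument avoids pointwise convergence entirely and instead computes the $B$-Fourier coefficients of $h$ inside $H^2(\bb D^n)$. Fix ${\bs k}=(k_1,\dots,k_n)$. Because $\{\phi B_1^{m_1}\cdots B_n^{m_n}\}$ is orthonormal in $\cl M$ and each $T_i$ acts isometrically on $\cl M$, the tail $h-\phi f_{k_1\dots k_n}$ can be written as $B_1^{k_1+1}h_1+\cdots+B_n^{k_n+1}h_n$ with $h_1,\dots,h_n\in\cl M$ (partition the remaining orthonormal series according to which coordinate exceeds $k_i$, and pull the continuous operator $T_i^{k_i+1}$ out of each convergent block). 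Each $h_i$ lies in $\cl M\subseteq H^2(\bb D^n)$, and $B_i^{k_i+1}H^2(\bb D^n)$ is orthogonal in $H^2(\bb D^n)$ to every basis vector $e_{{\bs i}{\bs k}}$; hence $\langle h,e_{{\bs i}{\bs k}}\rangle=\langle \phi f_{k_1\dots k_n},e_{{\bs i}{\bs k}}\rangle=\langle \phi_{\bs i}f,B_1^{k_1}\cdots B_n^{k_n}\rangle$, where $\phi=\sum e_{i_10}^1\cdots e_{i_n0}^n\phi_{\bs i}$ as in Lemma \ref{STL1}. Letting $({\bs i},{\bs k})$ range over all indices pins down every $B$-Fourier coefficient of $h$, shows $\phi_{\bs i}f\in H^2(B_1,\dots,B_n)$ and $\phi f\in H^2(\bb D^n)$, and yields $h=\phi f$. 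This coefficient-matching is the algebraic substitute for bounded containment, and it is the step your proposal leaves unproved.
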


\begin{proof} To prove $(i)$, let $f\in H^2(B_1,\dots,B_n)$ and $f=\sum\limits_{i_1,\dots,i_n= 0}^\infty \alpha_{i_1\cdots i_n}B_1^{i_1}\cdots B_n^{i_n}.$ Suppose $f_{k_1\dots k_n}=\sum\limits_{i_1,\dots,i_n\ge 0}^{k_1,\dots, k_n} \alpha_{i_1\cdots i_n}B_1^{i_1}\cdots B_n^{i_n}.$ Then $f_{k_1\dots k_n}$ converges to $f$ in $H^2(\bb D^n).$ Now consider
\begin{align}
||\phi f_{k_1\cdots k_n}||^2_{\cl M} &= ||\sum\limits_{i_1,\dots,i_n= 0}^{k_1\cdots k_n} \alpha_{i_1\cdots i_n}\phi B_1^{i_1}\cdots B_n^{i_n}||_{\cl M}^2\nonumber\\
&=  \sum\limits_{i_1,\dots,i_n= 0}^{k_1\cdots k_n} ||\alpha_{i_1\cdots i_n}\phi B_1^{i_1}\cdots B_n^{i_n}||_{\cl M}^2\nonumber\\
&=  \sum\limits_{i_1,\dots,i_n= 0}^{k_1\cdots k_n} |\alpha_{i_1\cdots i_n}|^2\nonumber\\
&= ||f_{k_1\cdots k_n}||^2_2\label{ST9}
\end{align}
Thus, $\{\phi f_{k_1\cdots k_n}\}$ is a Cauchy sequence in $\cl M$ and hence converges to some $h\in \cl M.$ We divide the rest of the proof of part $(i)$ in the following two case. 

\vspace{2 mm}

\noindent {\bf \underline{Case: p=2}} For any fixed ${\bs k}=(k_1,\dots,k_n)\in \bb N_0^n,$ we can write 
\begin{align*}
h &= \sum\limits_{\substack{{\bs j}=(j_1,\dots, j_n)\\ j_1,\dots, j_n = 0}}^{k_1,\dots, k_n}\alpha_{\bs j}\phi B_1^{j_1}\cdots B_n^{j_n}+B_1^{k_1+1}h_1+\cdots+ B_n^{k_n+1}h_n\\
&= \phi f_{k_1\dots k_n}+ B_1^{k_1+1}h_1+\cdots+ B_n^{k_n+1}h_n\
\end{align*}
for some $h_1,\dots,h_n\in \cl M.$ Let $\phi=\sum\limits_{\substack{{\bs i}=(i_1,\cdots,i_n)\\i_1,\cdots, i_n=0}}^{r_1-1,\cdots, r_n-1 } e_{i_10}^1\cdots e_{i_n0}^n \phi_{\bs i}$ for some $\phi_{\bs i}\in H^2(B_1,\dots, B_n).$ Then the coefficient of $e_{{\bs i}{\bs k}}$ in $h$ is given by
\begin{align*}
\langle{h,e_{{\bs i}{\bs k}}} \rangle 
&= \langle {\phi f_{k_1\dots k_n}, e_{{\bs i}{\bs k}}}\rangle\\
&= \langle {\phi_i f_{k_1\dots k_n}, B_1^{k_1}\cdots B_n^{k_n} }\rangle
\end{align*}

This shows that $\phi_i f\in H^2(B_1,\dots, B_n)$ which implies that $\phi f\in H^2(\bb D^n).$ We also get 
$\langle{\phi f, e_{{\bs i}{\bs k}}}\rangle = \langle {h, e_{{\bs i}{\bs k}}}\rangle$ which implies $h=\phi f.$ Thus, 
$\phi H^2(B_1,\dots,B_n)\subseteq \cl M.$ 

\vspace{2 mm}

\noindent {\bf \underline{Case: $1\le p<2$}} Since $\cl M$ is boundedly contained in $H^p(\bb D^n)$ and $\phi f_{k_1\cdots k_n}$ 
converges to $h$ in $\cl M$, therefore $\phi f_{k_1\cdots k_n}$ 
converges to $h$ in the $p$-norm. Also, $f_{k_1\cdots k_n}$ converges to $f$ in $2$-norm. Now using the fact that convergence in $p$-norm as well as the convergence in $2$-norm implies pointwise convergence on $\bb D^n,$ we conclude $h=\phi f.$  Hence, $\phi f \in \cl M.$  

Thus, from the above two cases, we conclude that $\phi H^2(B_1,\dots, B_n)\subseteq \cl M.$ This establishes $(i)$.

Now using $(i)$ and equation (\ref{ST7}) together with the fact that each $e_{i_10}^1\cdots e_{i_n0}^n$ is in $H^\infty(\bb D^n)$, we readily conclude that 
$\phi H^2(\bb D^n) \subseteq H^p(\bb D^n).$ Hence $\phi \in H^\infty (\bb D^n)$ when $\cl M\subseteq H^2(\bb D^n)$ and 
$\phi \in H^\frac{2p}{2-p} (\bb D^n)$ when $\cl M\subseteq H^p(\bb D^n)$ for $1\le p<2.$ Lastly, $(iii)$ can be deduced easily from equation (\ref{ST9}).
\end{proof}

\begin{lemma}\label{STL3} Let $\cl M$ be a non-zero Hilbert space that is a vector subspace of $H^2(\bb D^n)$. Suppose $\cl M$ and the 
operators $T_{B_1}, \dots, T_{B_n}$ satisfy the hypotheses of Theorem \ref{STThm}. If $\phi_1, \dots, \phi_r$ are non-zero functions in 
$H^\infty(\bb D^n)$ such that
\begin{enumerate}
\item[(i)] $\phi_i H^2(B_1,\dots, B_n)\subseteq \cl M$ for each $i=1,\dots, r,$
\item[(ii)] $\phi _i H^2(B_1,\dots, B_n)\perp \phi _j H^2(B_1,\dots, B_n)\ $ in $\cl M$ whenever $i\ne j.$
\end{enumerate}
Then $r\le r_1\cdots r_n.$
\end{lemma}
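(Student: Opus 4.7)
The approach is proof by contradiction: assume $r > r_1\cdots r_n$ and construct a nontrivial relation $\sum_{i=1}^r f_i \phi_i = 0$ with $f_i \in H^2(B_1,\dots,B_n)$, then invoke the orthogonality in (ii) to force every $f_i$ to vanish.

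Since each $\phi_i \in H^\infty(\bb D^n)$, Lemma~\ref{STL1} expands
$$
\phi_i \;=\; \sum_{\substack{\bs j=(j_1,\dots,j_n)\\ 0\le j_\ell\le r_\ell-1}} e^1_{j_1 0}\cdots e^n_{j_n 0}\, \phi_{i,\bs j},
$$
with coefficients $\phi_{i,\bs j} \in R := H^\infty(\bb D^n) \cap H^2(B_1,\dots,B_n)$. The ring $R$ is a subring of $H^\infty(\bb D^n)$, hence an integral domain by analytic continuation on the connected set $\bb D^n$, and by equation (\ref{ST7}) the space $H^2(\bb D^n)$ is a free $R$-module of rank $r_1\cdots r_n$ with basis $\{e^1_{j_1 0}\cdots e^n_{j_n 0}\}_{\bs j}$.

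Suppose now $r > r_1\cdots r_n$. The coefficient array $(\phi_{i,\bs j})$ is an $r \times (r_1\cdots r_n)$ matrix with entries in $R$; base-changing to the fraction field $Q$ of $R$, its $r$ rows are linearly dependent over $Q$. Clearing the common denominator from a nontrivial $Q$-relation gives $f_1,\dots,f_r \in R$, not all zero, satisfying $\sum_{i=1}^r f_i\, \phi_{i,\bs j} = 0$ for every $\bs j$. Multiplying each expansion of $\phi_i$ by $f_i$ and summing, the uniqueness of the direct-sum decomposition (\ref{ST7}) yields $\sum_{i=1}^r f_i \phi_i = 0$ in $H^2(\bb D^n)$.

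Since each $f_i \in R \subset H^2(B_1,\dots,B_n)$, hypothesis (i) places $f_i\phi_i$ inside $\phi_i H^2(B_1,\dots,B_n) \subseteq \cl M$, and hypothesis (ii) makes these summands pairwise orthogonal in $\cl M$. Applying the Pythagorean identity to $\sum_i f_i\phi_i = 0$ in $\cl M$ forces $\|f_i\phi_i\|_{\cl M} = 0$ for each $i$, hence $f_i\phi_i = 0$ as a function in $H^2(\bb D^n)$; since $\phi_i \ne 0$ and $H^\infty(\bb D^n)$ is an integral domain, every $f_i = 0$, contradicting the construction. The principal obstacle is the linear-algebraic passage through the fraction field: one must verify that $R$ is genuinely an integral domain (so that clearing denominators is legitimate and preserves non-triviality of the relation) and that the direct-sum decomposition (\ref{ST7}) is respected by multiplication by $R$, so that an identity among the coefficient vectors translates into an honest identity among the $\phi_i$ in $H^2(\bb D^n)$.
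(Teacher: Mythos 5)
Your argument is correct, and it reaches the conclusion by a genuinely different and more economical route than the paper. The paper proves the lemma by an iterated determinant computation: starting from a $(2^n+1)\times(2^n+1)$ matrix whose first row is $(\phi_1,\dots,\phi_{2^n+1})$ and whose remaining rows are the coefficient rows from Lemma \ref{STL1}, it expands along the first row, pairs the resulting relation $\sum_j(-1)^{j-1}\phi_j\lambda_j=0$ against each $\phi_k\lambda_k$ and uses hypothesis $(ii)$ to kill every top-order minor, then descends through matrices of every smaller size until all the coefficients $\phi^j_{\bs q}$ are shown to vanish, contradicting $\phi_j\ne 0$. You instead make a single pass: $r>r_1\cdots r_n$ rows in $Q^{r_1\cdots r_n}$ (where $Q$ is the fraction field of $R=H^\infty(\bb D^n)\cap H^2(B_1,\dots,B_n)$) must be dependent, clearing denominators gives one nontrivial relation $\sum_i f_i\phi_i=0$ with $f_i\in R\subseteq H^2(B_1,\dots,B_n)$, and then hypotheses $(i)$ and $(ii)$ plus the Pythagorean identity in $\cl M$ and the domain property of holomorphic functions on the connected set $\bb D^n$ force every $f_i=0$. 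Both proofs rest on exactly the same ingredients --- Lemma \ref{STL1}, the decomposition (\ref{ST7}), the orthogonality hypothesis, and the fact that $R$ is a multiplicatively closed integral domain (the paper needs this too, to make sense of its minors as elements of $H^2(B_1,\dots,B_n)$) --- but your fraction-field argument replaces the paper's $2^n-1$ rounds of minor-chasing, which the paper only writes out for $r_1=\dots=r_n=2$, with one standard rank bound. The two verifications you flag do go through: $R$ is a domain by analytic continuation, and multiplication by a bounded element of $H^2(B_1,\dots,B_n)$ preserves each summand $e^1_{j_10}\cdots e^n_{j_n0}H^2(B_1,\dots,B_n)$ by continuity and closedness, so the coefficient identity does lift to $\sum_i f_i\phi_i=0$ in $H^2(\bb D^n)$. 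One small imprecision: (\ref{ST7}) exhibits $H^2(\bb D^n)$ as $\bigoplus_{\bs j}e^1_{j_10}\cdots e^n_{j_n0}H^2(B_1,\dots,B_n)$, not as a free $R$-module of rank $r_1\cdots r_n$; but all your argument actually uses is that the coefficients $\phi_{i,\bs j}$ are uniquely determined, lie in $R$, and behave componentwise under multiplication by $R$, so this does not affect the proof.
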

\begin{proof} To understand the proof better we assume that each $B_i$ has 2 factors, that is, $r_1= \dots= r_n=2.$ The proof for the general case is identical.  

Suppose there are $r_1\cdots r_n+1 = 2^n+1$ non-zero functions $\phi_1, \dots, \phi_{2^n+1}$ in 
$\cl M$ which satisfy the hypotheses $(i)$ and $(ii)$ given in the statement of the lemma. Using the decomposition of $H^2(\bb D^n)$ given by 
equation (\ref{ST7}), we can write 
\begin{equation}\label{ST10}
\phi_j = \sum\limits_{\substack{i_1,\dots, i_n=0\\{\bs i}=(i_1,\dots,i_n)}}^{1,\dots, 1}e^1_{i_10}\cdots e_{i_n0}^n\phi_{\bs i}^j, \quad 1\le j\le 2^n+1.
\end{equation}
Let ${\bs t}_1,\cdots,{\bs t}_{2^n}$ be the enumeration of the set $\{{\bs i}=(i_1,\dots,i_n): 0\le i_j\le 1\}$ such that 
${\bs t_1}<\cdots<{\bs t_{2^n}}$ (dictionary order) and define 
$$
A_1 = 
\left(\begin{array}{cccc}
\phi_1 & \phi_2    & \cdots                  & \phi_{2^n+1}\\
\phi_{\bs t_1}^1 & \phi_{\bs t_1}^2 & \cdots & \phi_{\bs t_1}^{2^n+1}\\
\vdots                  & \vdots                  & \ddots  & \vdots \\
\phi_{\bs t_{2^n}} ^1 & \phi_{\bs t_{2^n}}^2 & \dots & \phi_{\bs t_{2^n}}^{2^n+1} 
\end{array}\right),
$$
Then, using equation (\ref{ST10}), $det(A_1)=0.$ Note that each $\phi_j\in H^\infty(\bb D^n),$ therefore, using Lemma \ref{STL1} , each $\phi_{\bs t_i}^j\in H^\infty(\bb D^n).$ Also, $\phi_{\bs t_i}^j\in H^2(B_1,\dots, B_n).$ Therefore, the minor of each $\phi_j$ is a well-defined element of $H^2(B_1,\dots, B_n).$ Let $\lambda_j^1$ denote the minor of $\phi_j.$ Then 
$$
det(A_1)=\sum_{j=1}^{2^n+1} (-1)^{j-1}\phi_j \lambda_j^1=0.
$$
Therefore 
$$
\left\langle{\sum_{j=1}^{2^n+1} (-1)^{j-1}\phi_j \lambda_j^1,\phi_k\lambda_k^1}\right\rangle_{\cl M} = 0
$$
for every $1\le k\le 2^n+1.$ Now using the hypothesis $(ii),$ we get 
$\langle{\phi_k\lambda_k^1,\phi_k\lambda_k^1}\rangle_{\cl M}=0$ which implies $\lambda_k^1=0$ for every $1\le k\le 2^{n+1}.$ 

For the next step, take any $2^n$ elements $p_1, \dots, p_{2^n}\subseteq \{1,\dots, 2^{n}+1\}$ such that $p_1<p_2<\cdots<p_{2^n}$ (usual order) and $2^n-1$ elements ${\bs q}_1,\dots, {\bs q}_{2^n-1}\subseteq \{{\bs i}=(i_1,\dots, i_n): 0\le i_j\le 1\}$ such that  ${\bs q}_1<{\bs q}_2<\cdots {\bs q}_{2^n-1}$ (dictionary order) and form the $2^n\times 2^n$ matrix
$$
A_2=
\left(\begin{array}{cccc}
\phi_{p_1} & \phi_{p_2}    & \cdots                  & \phi_{p_{2^n}}\\
\phi_{{\bs q}_1}^{p_1} & \phi_{{\bs q}_1}^{p_2} & \cdots & \phi_{{\bs q}_1}^{p_2^n}\\
\vdots                  & \vdots                  & \ddots  & \vdots \\
\phi_{{\bs q}_{2^n-1}} ^{p_1} & \phi_{{\bs q}_{2^n-1}}^{p_2} & \dots & \phi_{{\bs q}_{2^n-1}}^{p_n} 
\end{array}\right),
$$
By using the expansion of $\phi_{p_j}$ as written in equation (\ref{ST10}), we obtain that $det(A_2)$ is equal to a scalar (either 1 or -1) multiple of $\lambda_k^1$ for some $1\le k\le 2^n+1,$ hence must be zero.

Again, as explained above, the minor of each $\phi_{p_j}$ is a well-defined element of $H^2(B_1,\dots, B_n),$ let us denote it by 
$\lambda_{j}^2.$ Then, 
\begin{equation}\label{ST11}
det(A_2)=\sum_{i=1}^{2^n}(-1)^{j-1}\phi_{p_j}\lambda_{j}^2=0.
\end{equation}

Now using equation (\ref{ST11}), hypothesis $(ii)$, and following the similar arguments as above, we obtain that $\lambda_{p_j}^2=0$ for $1\le j\le 2^n.$ We repeat this process for all  choices of 
$\{p_1,\dots, p_{2^n} \}\subseteq \{1,\dots, 2^n+1\}$ and 
$\{{\bs q}_1,\dots, {\bs q}_{2^n-1}\}\subseteq \{{\bs i}=(i_1,\dots, i_n): 0\le i_j\le 1\}.$   

After $2^n-2$ iterations, we form a $3\times 3$ matrix 
$$
A_{2^n-1}=
\left(\begin{array}{ccc}
\phi_{p_1} & \phi_{p_2} & \phi_{p_3}\\
\phi_{{\bs q}_1}^{p_1} & \phi_{{\bs q}_1}^{p_2} & \phi_{{\bs q}_1}^{p_3}\\
\phi_{{\bs q}_2}^{p_1} & \phi_{{\bs q}_2}^{p_2} & \phi_{{\bs q}_2}^{p_3}
\end{array}\right)
$$
for some $p_1, p_2, p_3\in \{1,\dots, 2^n+1\}$ with $p_1<p_2<p_3$ (usual order) and 
${\bs q}_1, {\bs q}_2\in \{{\bs i}=(i_1,\dots, i_n): 0\le i_j\le 1\}$ with ${\bs q}_1<{\bs q}_2$ (dictionary order). 
Then, as done before, using a minor from the $(2^n-2)^{th}$ step we obtain that $det (A_{2^n-1})=0.$ Let $\lambda_i^{2^n-1}$ denote the minor of $\phi_{p_i}$ which is a well-defined element of $H^2(B_1,\dots, B_n)$ because of Lemma \ref{STL1} and equation (\ref{ST10}). Now using the hypothesis $\phi_i H^2(B_1,\dots B_n)\perp \phi_j H^2(B_1,\dots, B_n)$ whenever $i\ne j$ one more time,  we obtain that $\phi_{{\bs q}_1}^{p_i}\phi_{{\bs q}_2}^{p_j}-\phi_{{\bs q}_1}^{p_j}\phi_{{\bs q}_2}^{p_i}=0$ for $i\ne j, \ i, j=1,2,3.$ By repeating this for all possible choices of $p_1,p_2,p_3$ and ${\bs q}_1,{\bs q}_2,$ we obtain
\begin{equation}\label{ST12*}
\phi_{\bs j}^l\phi_{\bs k}^m = \phi_{\bs k}^l \phi_{\bs j}^m
\end{equation}
for any $l, m\in\{1,\dots 2^{m+1}\}$ with $l\ne m$ and ${\bs j}, {\bs k}\in \{{\bs i}=(i_1,\dots, i_n): 0\le i_l\le 1\}$ with 
${\bs j}\ne {\bs k}.$

Lastly, for any fixed choice of $p_1, p_2 \in \{1,\dots, 2^n+1 \}$ with $p_1<p_2$ and 
${\bs q}\in \{{\bs i}=(i_1,\dots, i_n): 0\le i_j\le 1\}$ form the matrix
$$
A_{2^n}=
\left(\begin{array}{cc}
\phi_{p_1} & \phi_{p_2}\\
\phi_{\bs q}^{p_1} & \phi_{\bs q}^{p_2}
\end{array}\right)
$$ 
Then $det(A_{2^n})= \sum\limits_{\substack{i=(i_1,\dots, i_n)\\{\bs i}\ne {\bs q}\\ i_1\,\dots,i_n=0}}^{1,\dots, 1}e_{i_10}^1\cdots e_{i_n0}^n(\phi_{\bs i}^{p_1}\phi_{\bs q}^{p_2}-\phi_{\bs q}^{p_1}\phi_{\bs i}^{p_2})=0,$ using equation (\ref{ST12*}). Thus, $\phi_{p_1}\phi_{\bs q}^{p_2}=\phi_{p_2}\phi_{\bs q}^{p_1}=0.$ 
But $\phi_{p_1}H^2(B_1,\dots, B_n)\perp \phi_{p_2}H^2(B_1,\dots, B_n).$ Therefore $\phi_{p_1}\phi_{\bs q}^{p_2}=0=\phi_{p_2}\phi_{\bs q}^{p_1}$ which implies that 
$\phi_{\bs q}^{p_1}=\phi_{\bs q}^{p_2}=0.$ Repeating the process for choices of $p_1$ and ${\bs q},$ we conclude that $\phi_{\bs q}^j=0$ for all $1\le j\le 2^n+1$ and ${\bs q}\in \{{\bs i}=(i_1,\dots, i_n):0\le i_j \le 1\}$ which implies that $\phi_j=0$ for all $j,$ but each $\phi_j$ is non zero. Hence we arrive at a contradiction which establishes that we can't have more than $2^n$ functions in $H^\infty(\bb D^n)$ that 
satisfy the hypotheses $(i)$ and $(ii)$. This completes the proof.  
\end{proof}

Before giving the proof of Theorem \ref{STThm}, we note that it's proof for the Case $p=2,$ is essentially Lemmas \ref{STL2} and \ref{STL3}; therefore, it is indeed a careful extension of ideas from \cite{ST} to several variables situation. But, we would like to mention that the ideas used for the Case $1\le p<2$ are new and original.

 \vspace{.2 cm}
 
\noindent{\bf \underline{Proof of Theorem \ref{STThm}}} Since $T_1,\dots, T_n$ acts as isometries on $\cl M$ and doubly commute as operators on $\cl M,$ therefore using Theorem \ref{wdinvsv}, we can decompose $\cl M$ as  
\begin{equation}\label{rep9}
\cl M= \bigoplus_{A\subseteq I_n} \cl M_A,
\end{equation}
where 
\begin{align*}
\cl M_A&=\bigoplus_{{\bs r}\in \bb N_0^A} T^{\bs r}_A\left(\bigcap_{{\bs j}\in \bb N_0^{I_n\setminus A}}T^{\bs j}_{I_n\setminus A}(\cl W_A)\right) \quad when 
\quad  \emptyset \ne A\subseteq I_n\\
and \quad  \cl M_A &= \bigcap_{{\bs r}\in \bb N_0^{I_n}} T^{\bs r}_{I_n }\cl M \quad when \quad A=\emptyset
\end{align*}
with $\cl W_A=\bigcap_{i\in A}\cl W_i, \ \cl W_i=ker(T_{i}^*).$ Note that elements of $\cl M$ are analytic functions on $\bb D^n$ and 
$B_i(0)=0$ for all $1\le i\le n$, therefore $\cl M_A=\{0\}$ whenever $I_n\setminus A \ne \emptyset.$ So, equation (\ref{rep9}) reduces to  
\begin{equation}\label{rep2*}
\cl M = \bigoplus_{{\bs r}\in I_n}T^{\bs r}_{I_n}\cl W_{I_n}
\end{equation}

We will now show that the dimension of $\cl W_{I_n}$ can at be most $r_1\cdots r_n.$ For this we will work with cases $p=2$ and $1\le p<2$ separately.

\vspace{2 mm}

\noindent {\bf \underline{Case $p=2: \cl M\subseteq H^2(\bb D^n)$}} Let $\{\phi_1,\dots, \phi_r\}$ be an orthonormal set in 
$\cl W_{I_n}.$ Then, for each $i, \ \{\phi_i B_1^{m_1}\cdots B_n^{m_n}: m_1,\dots, m_n\ge 0\}$ is an orthonormal set in $\cl M.$ Thus, using Lemma \ref{STL2}, we obtain that 
\begin{enumerate}[(\em i)]
\item $\phi_i H^2(B_1,\dots, B_n)\subseteq \cl M$ for each $i$;
\item $\phi_i\in H^\infty(\bb D^n)$ for each $i;$
\item $||\phi_i f||_{\cl M}= ||f||_2$ for every $f\in H^2(B_1,\dots, B_n).$
\end{enumerate}  

Also, $\phi_i H^2(B_1,\dots, B_n)\perp \phi_j H^2(B_1,\dots, B_n)$ whenever $i\ne j.$ Then, using Lemma \ref{STL3}  we conclude, $r\le r_1\cdots r_n.$ Hence the 
dimension of $\cl W_{I_n}$ in this case can at be most be $r_1\cdots r_n.$

 \vspace{2 mm}

\noindent{\bf \underline{Case: $1\le p<2$}} Using equation (\ref{rep2*}) and Lemma \ref{STL2}, we get that 
$\cl W_{I_n}\subseteq H^\frac{2p}{2-p}(\bb D^n)\subseteq H^2(\bb D^n)$ which implies $\cl M\cap H^2(\bb D^n)\ne \emptyset$. 

Set $\cl X=\cl M\cap H^2(\bb D^n).$ Then we can easily check that  
$$
||x||^2 = ||x||_{\cl M}^2 + ||x||_2^2
$$
defines a norm on $\cl X$ and it becomes a Hilbert space with this norm. Note that $\cl X\subseteq H^2(\bb D^n)$ and each $T_i$ is an isometry on $\cl X.$ Hence using Case $p=2,$ we get 
$$
\cl X=\bigoplus\limits_{m_1,\dots, m_n\ge 0} T_1^{m_1}\cdots T_n^{m_n}\cl N
$$ 
where $\cl N=\cl X_1\cap\cdots\cap\cl X_n$ with $\cl X_i=ker(T_i^*)$ and $dim(\cl N)$ can at most be $r_1\cdots r_n.$ We will show that $dim(\cl W_{I_n})\le dim(\cl N).$ For this, let $\{f_1,\dots, f_k\}$ be a linearly independent set in $\cl W_{I_n}.$ Note that, using Proposition \ref{wdinv-prop}, we can decompose $\cl X$ in terms of $\cl X_1,\dots, \cl X_{n-1}$ as follows:
$$
\cl X= T_1(\cl X)\oplus T_2(\cl X_1)\oplus T_3(\cl X_1\cap \cl X_2)\oplus\cdots \oplus T_n\left(\bigcap_{i=1}^{n-1}\cl X_i\right)\oplus \left(\bigcap_{i=1}^n \cl X_i\right).
$$
Since $\cl W_{I_n}\subseteq X,$ we can represent each $f_i$ as 
$$
f_i=T_1(a_i^1)+T_2(a_i^2)+T_3(a_i^3)+\cdots+T_n(a_i^n)+b_i,
$$
where $a_i^1\in \cl X, \ b_i\in \cl X_1\cap\cdots\cap \cl X_n=\cl N$ and $a_i^j\in \cl X_1\cap\cdots \cap \cl X_{j-1}$ for $2\le j\le n.$  
We claim that 
$\{b_1,\dots, b_k\}$ is a linearly independent set. Suppose $\sum\limits_{i=1}^k\alpha_i b_i=0.$ Then,
\begin{equation}\label{ST12}
\sum_{i=1}^k \alpha_i f_i =\sum_{i=1}^k \sum_{j=1}^n \alpha_iT_j(a_i^j)= \sum_{j=1}^n T_j\left(\sum_{i=1}^k \alpha_ia_i^j\right) .
\end{equation}
Note that the right hand side of equation (\ref{ST12}) has an element of 
$\sum_{i=1}^n T_i(\cl X)\subseteq \sum_{i=1}^n T_i(\cl M)\subseteq \cl W_{I_n}^\perp,$ whereas left hand side is an element of 
$\cl W_{I_n}.$ Therefore, $\sum_{i=1}^k \alpha_if_i=0$ which implies that each 
$\alpha_i=0.$ This proves that $\{b_1,\dots,b_k\}$ is a linearly independent subset of $\cl N.$ Hence $k$ can at be most $r_1\cdots r_n,$ since $dim(\cl N)\le r_1\cdots r_n.$ This implies that $dim(\cl W_{I_n})\le r_1\cdots r_n.$

\vspace{.2 cm}

From the above two cases, we conclude that $dim(\cl W_{I_n})\le r_1\cdots r_n, \ \cl W_{I_n}\subseteq H^\infty(\bb D^n) $ when $\cl M$ is a vector subspace of $H^2(\bb D^n),$ and $\cl W_{I_n}\subseteq H^\frac{2p}{2-p}(\bb D^n)$ when $\cl M$ is boundedly contained in $H^p(\bb D^n).$ Now equation (\ref{rep3}) follows from equation (\ref{rep2*}) by taking an orthonormal basis for $\cl W_{I_n}.$ Lastly, equation 
(\ref{rep4}) follows from equation (\ref{rep3}) together with the fact $||\phi f||_{\cl M}=||f||_2$ for every 
$\phi\in \cl M$ with $||\phi||_{\cl M}$ and $f\in H^2(B_1,\dots, B_n).$ This completes the proof.
\qedsymbol

\vspace{1 mm}

\begin{lemma}\label{uniq} Let $\cl Y$ be a Hilbert space space consisting of analytic functions over $\bb D^n.$ Suppose $\{\phi_1,\dots, \phi_r\}$ is a linearly independent set in $\cl  Y$ such that
\begin{enumerate}[(i)]
\item for each $1\le j\le r, \ \phi_jf\in \cl Y$ whenever $f\in H^2(B_1,\dots,B_n)$ and 
$\phi_jf = \sum\limits_{\substack{{\bs m} =(m_1,\dots, m_n)\\m_1,\dots, m_n=0}}^{\infty}\alpha_{\bs m}\phi_jB_1^{m_1}\cdots B_n^{m_n}$ 
for $f=\sum\limits_{\substack{{\bs m} =(m_1,\dots, m_n)\\m_1,\dots, m_n=0}}^{\infty}\alpha_{\bs m}B_1^{m_1}\cdots B_n^{m_n};$ 
\item $\phi_j B_1^{l_1}\cdots B_n^{l_n}\perp \phi_k B_1^{m_1}\cdots B_n^{m_n}$ in $\cl Y$ for every $1\le j, \ k \le r$ whenever 
$(l_1,\dots,l_n)\ne (m_1,\dots,m_n).$
\end{enumerate}
Then $\phi_1f_1+\cdots+\phi_rf_r=0$ for $f_1,\dots, f_r\in H^2(B_1,\dots,B_n)$ if and only if each $f_i=0.$
\end{lemma}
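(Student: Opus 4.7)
The reverse implication is immediate, so the plan is to prove the forward direction. The idea is to use hypothesis (i) to convert a vanishing sum $\sum_i \phi_i f_i = 0$ into a Fourier-type expansion in powers of the $B_j$'s, then exploit the orthogonality in (ii) to reduce, at each multi-index $\bs l$, to a finite linear system whose coefficient matrix is a Gram matrix; finally use the linear independence of $\{\phi_1,\dots,\phi_r\}$ together with the fact that elements of $\cl Y$ are honest analytic functions to show this Gram matrix is invertible.

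Concretely, I would first expand each $f_i\in H^2(B_1,\dots,B_n)$ as
\[
f_i=\sum_{{\bs m}\in\bb N_0^n}\alpha^i_{\bs m}\,B_1^{m_1}\cdots B_n^{m_n},
\]
so by hypothesis (i), $\phi_i f_i=\sum_{\bs m}\alpha^i_{\bs m}\,\phi_iB_1^{m_1}\cdots B_n^{m_n}$ as an equation in $\cl Y$. Assuming $\sum_{i=1}^r\phi_if_i=0$ in $\cl Y$ and taking the inner product against $\phi_kB_1^{l_1}\cdots B_n^{l_n}$ for a fixed multi-index $\bs l$ and $k\in\{1,\dots,r\}$, continuity of the inner product together with the orthogonality in (ii) kills every term with ${\bs m}\ne{\bs l}$, leaving
\[
\sum_{i=1}^{r}\alpha^i_{\bs l}\,\bigl\langle \phi_iB_1^{l_1}\cdots B_n^{l_n},\,\phi_kB_1^{l_1}\cdots B_n^{l_n}\bigr\rangle_{\cl Y}=0
\]
for each $k$. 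This is the Gram system for the $r$-tuple $\{\phi_iB_1^{l_1}\cdots B_n^{l_n}\}_{i=1}^{r}$ in $\cl Y$.

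The main (and essentially only) obstacle is to show that this Gram matrix is invertible for every $\bs l$, equivalently, that the vectors $\phi_1B_1^{l_1}\cdots B_n^{l_n},\dots,\phi_rB_1^{l_1}\cdots B_n^{l_n}$ are linearly independent in $\cl Y$. For this, suppose $\sum_i c_i\phi_iB_1^{l_1}\cdots B_n^{l_n}=0$ in $\cl Y$. Since $\cl Y$ consists of analytic functions on $\bb D^n$, this is the zero analytic function, and the product $B_1^{l_1}\cdots B_n^{l_n}$ vanishes only on a proper analytic subvariety of $\bb D^n$; hence $\sum_i c_i\phi_i=0$ on a dense open subset, and by analytic continuation it is identically zero, i.e., it is the zero element of $\cl Y$. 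The hypothesized linear independence of $\{\phi_1,\dots,\phi_r\}$ in $\cl Y$ then forces every $c_i=0$, so the Gram matrix is positive definite, in particular invertible.

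Invertibility of the Gram matrix at each $\bs l$ yields $\alpha^i_{\bs l}=0$ for all $i$ and all ${\bs l}\in\bb N_0^n$, hence each $f_i=0$, completing the proof. Note that we do not need any bounded-containment hypothesis here: only the identification of $\cl Y$ with a space of analytic functions (so that the zero vector is the zero function) and the analytic non-triviality of $B_1^{l_1}\cdots B_n^{l_n}$ are used beyond hypotheses (i) and (ii).
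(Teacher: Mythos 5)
Your proof is correct and takes essentially the same route as the paper's: expand each $f_i$ in powers of the $B_j$'s, pair the vanishing sum against $\phi_k B_1^{l_1}\cdots B_n^{l_n}$, use the orthogonality in (ii) to isolate the coefficients at each multi-index, and then cancel $B_1^{l_1}\cdots B_n^{l_n}$ via analyticity before invoking the linear independence of the $\phi_i$. The only cosmetic difference is that the paper combines the $r$ equations into the single identity $\left\|(\alpha_{\bs l}^1\phi_1+\cdots+\alpha_{\bs l}^r\phi_r)B_1^{l_1}\cdots B_n^{l_n}\right\|_{\cl Y}^2=0$ instead of phrasing the step as invertibility of the Gram matrix, which is the same linear-algebra fact.
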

\begin{proof} Suppose $f_1,\dots,f_r\in H^2(B_1,\dots,B_n)$ such that $\phi_1f_1+\cdots+\phi_rf_r=0.$ Then, for any fixed ${\bs m}=(m_1,\dots, m_n)$ 
and $1\le j\le r,$ 
$$ 
\langle{\phi_1f_1+\cdots+\phi_r f_r,\phi_j B_1^{m_1}\cdots B_n^{m_n}}\rangle =0.
$$

Let $f_i = \sum\limits_{\substack{{\bs k} =(k_1,\dots, k_n)\\k_1,\dots, k_n=0}}^{\infty} \alpha_{\bs k}^{i}B_1^{k_1}\cdots B_n^{k_n}$ for $1\le i\le r.$ Then 
$$
\left\langle{\alpha_{\bs m}^1B_1^{m_1}\cdots B_n^{m_n}\phi_1+\cdots+\alpha_{\bs m}^r\phi_r B_1^{m_1}\cdots B_n^{m_n},\phi_j B_1^{m_1}\cdots B_n^{m_n}}\right\rangle=0.  
$$

Thus, 
$$
\left\|(\alpha_{\bs m}^1\phi_1+\cdots +\alpha_{\bs m}^r\phi_r)B_1^{m_1}\cdots B_n^{m_n}\right\|^2 =0
$$
which implies 
$\alpha_{\bs m}^1\phi_1+\cdots +\alpha_{\bs m}^r\phi_r=0.$ Now $\phi_1,\dots, \phi_r$ are linearly independent, therefore 
$\alpha_{\bs m}^1=\cdots=\alpha_{\bs m}^r=0.$ By repeating this process for all ${\bs m}=(m_1,\dots,m_n),$ we get $\alpha_{\bs m}^j=0$ for every 
${\bs m}$ and every $1\le j\le r.$ Hence, each $f_j=0.$ This completes the proof.
\end{proof}

\begin{thm}\label{Hp-bdd-sv} Let $\cl M$ be a non-zero Hilbert space that is a vector subspace of $H^p(\bb D^n)$ for some $1\le p\le 2.$ Suppose 
$\cl M$ is boundedly contained in $H^p(\bb D^n)$ when $1\le p< 2.$ Further, suppose the operators $T_1,\dots, T_n$ satisfy the following conditions on $\cl M:$
\begin{enumerate}[(i)]
\item $T_j\cl M\subseteq \cl M$ for each $j$;
\item each $T_j$ is a near-isometry on $\cl M$;
\item the tuple $(T_1,\dots,T_n)$ doubly commute on $\cl M.$
\end{enumerate}

Then there exists an orthonormal set $\{\phi_1,\dots, \phi_r\}$ in $\cl M$ with $r\le r_1\dots r_n$ such 
that 
$$
\cl M = \overline {\phi_1H^2(B_1,\dots, B_n)\oplus \cdots \oplus \phi_r H^2(B_1,\dots, B_n)}
$$
where  the closure is in the norm on $\cl M$ and 
\begin{enumerate}[(a)]
\item each $\phi_i\in H^{\frac{2p}{2-p}}(\bb D^n)$ with the understanding that $\frac{2p}{2-p}$ means $\infty$ when $p=2,$ 
\item $||\phi_if||_{\cl M}\le ||f||_{\cl M}$ for each $f\in H^2(B_1,\dots,B_n)$ and $1\le i\le r$, 
\item the direct sum is an algebraic direct sum.
 \end{enumerate}
\end{thm}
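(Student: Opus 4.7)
The plan is to reduce to the isometric case Theorem~\ref{STThm} by introducing an auxiliary Hilbert space on which the $T_i$ act as genuine isometries. First, apply Theorem~\ref{wdinvsv} to the doubly commuting tuple $(T_1,\dots,T_n)$ of near-isometries on $\cl M$, yielding $\cl M = \bigoplus_{A\subseteq I_n}\cl M_A$ with $T_i|_{\cl M_A}$ invertible for each $i\in I_n\setminus A$. For a proper subset $A\subsetneq I_n$, pick $i\in I_n\setminus A$; any $f\in \cl M_A$ then equals $B_i^m(T_i^{-m}f)$ for every $m\ge 0$, with $T_i^{-m}f\in \cl M\subseteq H^p(\bb D^n)$ analytic on $\bb D^n$. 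Since $B_i(0)=0$ under the standing normalization $\alpha_1^i=0$, $f$ vanishes to every order in $z_i$ at $z_i=0$, forcing $f\equiv 0$ by analyticity. Hence
\[
\cl M = \cl M_{I_n} = \bigoplus_{\bs r\in \bb N_0^n} T^{\bs r}_{I_n}\cl W_{I_n}
\]
orthogonally in $\cl M$, where $\cl W_{I_n}=\bigcap_i \ker T_i^*$.

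To manufacture the isometric model, introduce
\[
\cl M_0 = \Bigl\{f=\sum_{\bs r}T^{\bs r}_{I_n}w_{\bs r}\in \cl M \,:\, w_{\bs r}\in \cl W_{I_n},\ \sum_{\bs r}\|w_{\bs r}\|_{\cl M}^2<\infty\Bigr\}
\]
with inner product $\langle f,g\rangle_0=\sum_{\bs r}\langle w_{\bs r},v_{\bs r}\rangle_{\cl M}$. The map $f\mapsto(w_{\bs r})$ identifies $\cl M_0$ isometrically with $\ell^2(\bb N_0^n;\cl W_{I_n})$, so $\cl M_0$ is a Hilbert space. Because each $T_i$ is a contraction on $\cl M$, one has $\|f\|_{\cl M}\le \|f\|_0$ on $\cl M_0$, so bounded containment in $H^p(\bb D^n)$ transfers from $\cl M$ to $\cl M_0$ when $1\le p<2$ (no such hypothesis is needed for $p=2$). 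On the Wold coordinates each $T_i$ acts as a pure shift in the $i$-th direction, so $T_i(\cl M_0)\subseteq \cl M_0$, $T_i$ is isometric on $\cl M_0$, and $(T_1,\dots,T_n)$ doubly commute under the $\langle\cdot,\cdot\rangle_0$-adjoints. Crucially, the wandering space $\cl W_{I_n}^{\cl M_0}$ comprises precisely those elements whose Wold decomposition is supported at $\bs r=\bs 0$, so $\cl W_{I_n}^{\cl M_0}=\cl W_{I_n}^{\cl M}$ as subspaces with identical inner product.

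Theorem~\ref{STThm} applied to $\cl M_0$ produces an orthonormal set $\{\phi_1,\dots,\phi_r\}\subseteq \cl W_{I_n}^{\cl M}$ with $r\le r_1\cdots r_n$ and $\phi_j\in H^{\frac{2p}{2-p}}(\bb D^n)$, establishing (a), together with an orthogonal decomposition $\cl M_0=\bigoplus_{j=1}^r \phi_j H^2(B_1,\dots,B_n)$ satisfying $\|\phi_j f\|_0=\|f\|_2$. Since $\|\cdot\|_{\cl M}\le \|\cdot\|_0$, the product $\phi_j f$ is simultaneously the $\cl M_0$- and $\cl M$-limit of the partial sums of $\sum \alpha_{\bs m}\phi_j B^{\bs m}$ for $f=\sum\alpha_{\bs m}B^{\bs m}\in H^2(B_1,\dots,B_n)$, and satisfies $\|\phi_j f\|_{\cl M}\le \|f\|_2$, giving (b). The finite Wold sums lie in $\cl M_0$ and are $\cl M$-dense by the decomposition of the first paragraph, so $\cl M_0$ is $\cl M$-dense; combined with the $\cl M_0$-density of $\sum_j\phi_j H^2(B_1,\dots,B_n)$ in $\cl M_0$, this delivers $\cl M=\overline{\sum_j\phi_j H^2(B_1,\dots,B_n)}^{\cl M}$. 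Finally, (c) follows from Lemma~\ref{uniq}: its hypothesis (ii) holds because $T^{\bs l}_{I_n}\phi_j\perp T^{\bs m}_{I_n}\phi_k$ in $\cl M$ whenever $\bs l\ne \bs m$ by Wold orthogonality, and the $\phi_j$ are linearly independent.

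The principal obstacle is verifying that $\cl M_0$ truly inherits every hypothesis of Theorem~\ref{STThm} --- most delicately the double commutation under the new adjoints and the preservation of bounded containment --- and then carefully transporting the resulting representation back from the stronger $\|\cdot\|_0$-topology to the original $\|\cdot\|_{\cl M}$-topology. The unifying observation that makes both steps transparent is that the Wold decomposition of Theorem~\ref{wdinvsv} converts $(T_1,\dots,T_n)$ into pure coordinate shifts on $\ell^2(\bb N_0^n;\cl W_{I_n})$, making the isometric structure, the double commutation, and the coincidence of wandering spaces all manifest.
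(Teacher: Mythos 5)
Your proof is correct, and while it shares the paper's two load-bearing ingredients --- the Wold decomposition of Theorem \ref{wdinvsv} (which collapses to $\cl M=\bigoplus_{\bs r}T^{\bs r}_{I_n}\cl W_{I_n}$ because $B_i(0)=0$) and a reduction to the isometric Theorem \ref{STThm} via a renorming --- you organize the reduction differently. The paper first proves \emph{by hand} that every $\phi\in\cl W_{I_n}$ multiplies $H^2(B_1,\dots,B_n)$ contractively into $\cl M$ and lies in $H^{2p/(2-p)}(\bb D^n)$, repeating the Cauchy-sequence argument of Lemma \ref{STL2} (partial sums $\phi f_{k_1\cdots k_n}$, identification of the limit via $B$-Fourier coefficients for $p=2$ and via pointwise convergence for $p<2$); only then, to bound $\dim\cl W_{I_n}$, does it renorm the finite algebraic sum $\cl X=\phi_1H^2(B_1,\dots,B_n)+\cdots+\phi_rH^2(B_1,\dots,B_n)$ by $\|\sum\phi_if_i\|^2_{\cl X}=\sum\|f_i\|_2^2$ and apply Theorem \ref{STThm} to $\cl X$. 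You instead renorm globally and abstractly: your $\cl M_0\cong\ell^2(\bb N_0^n;\cl W_{I_n})$ is defined purely from the Wold coordinates, needs no prior multiplier property, and satisfies all hypotheses of Theorem \ref{STThm} (the shift structure makes the isometricity and double commutation immediate, and $\|\cdot\|_{\cl M}\le\|\cdot\|_0$ transfers the bounded containment). A single application of Theorem \ref{STThm} to $\cl M_0$ then delivers the multiplier property, the membership $\phi_j\in H^{2p/(2-p)}$, the dimension bound, and the norm inequality all at once, and the transfer back to $\cl M$ via $\|\cdot\|_{\cl M}\le\|\cdot\|_0$ and the $\cl M$-density of $\cl M_0$ is clean. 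Two small points you should make explicit: that the orthonormal set produced by Theorem \ref{STThm} actually lies in the wandering subspace of $\cl M_0$ (this follows either from its proof, where the $\phi_j$ are chosen as a basis of that subspace, or from the computation $\langle\phi_j,\phi_kB_ig\rangle_0=\delta_{jk}\langle 1,B_ig\rangle_2=0$), so that orthonormality in $\langle\cdot,\cdot\rangle_0$ coincides with orthonormality in $\cl M$; and that $T_i$ genuinely maps $\cl M_0$ into $\cl M_0$, which requires noting that the Wold coordinates of $T_if$ are the shifted coordinates of $f$ by uniqueness of the orthogonal expansion. Neither is a gap, and your route arguably yields a more economical proof than the paper's.
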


\begin{proof} Since the tuple $(T_1,\dots, T_n)$ doubly commute and each $T_i$ is a near-isometry on $\cl M,$ using Theorem \ref{wdinvsv}, 
we decompose $\cl M$ as  
\begin{equation}\label{rep10}
\cl M = \bigoplus_{A\subseteq I_n} \cl M_A,
\end{equation}
where 
\begin{align*}
\cl M_A&=\bigoplus_{{\bs k}\in \bb N_0^A} T^{\bs k}_A\left(\bigcap_{{\bs j}\in \bb N_0^{I_n\setminus A}}T^{\bs j}_{I_n\setminus A}(\cl W_A)\right) \quad when 
\quad  \emptyset \ne A\subseteq I_n \ and \\
 \cl M_A &= \bigcap_{{\bs k}\in \bb N_0^{I_n}} T^{\bs k}_{I_n }\cl M \quad when \quad A=\emptyset
\end{align*}
with $\cl W_A=\bigcap_{i\in A}\cl W_i, \ \cl W_i=ker(T_i^*).$ Notice that elements of $\cl M$ are analytic functions on $\bb D^n$ and 
$B_i(0)=0$ for all $1\le i\le n$, therefore $\cl M_A=\{0\}$ whenever $I_n\setminus A \ne \emptyset.$ So, equation (\ref{rep10}) reduces to  
\begin{equation}\label{rep2}
\cl M = \bigoplus_{{\bs k}\in I_n}T^{\bs k}_{I_n}\cl W_{I_n}
\end{equation}

We will first show that $\cl W_{I_n}\subseteq H^\frac{2p}{2-p}(\bb D^n).$ For this, take $\phi\in \cl W_{I_n}$ with $||\phi||_{\cl M}=1$. Then $\{\phi B_1^{m_1}\cdots B_n^{m_n}: m_1, \dots, m_n\ge 0\}$ is an orthonormal set in $\cl M.$ Suppose $f\in H^2(B_1,\dots, B_n)$ with $f=\sum\limits_{\substack{{\bs j} =(j_1,\dots, j_n)\\j_1,\dots, j_n=0}}^{\infty} \alpha_{\bs j}B_1^{j_1}\cdots B_n^{j_n}.$ Set 
 $f_{k_1\cdots k_n}=\sum\limits_{\substack{{\bs j} =(j_1,\dots, j_n)\\j_1,\dots, j_n=0}}^{k_1,\dots, k_n}\alpha_{\bs j}B_1^{j_1}\cdots B_n^{j_n},$ then the sequence $\{f_{k_1\cdots k_n}\}$ converges to $f$ in $H^2(\bb D^n).$ Now consider
\begin{align}
||\phi f_{k_1\dots k_n}||_{\cl M}^2 &= \sum\limits_{\substack{{\bs j}=(j_1,\dots, j_n)\\ j_1,\dots, j_n = 0}}^{k_1,\dots, k_n}||\alpha_{\bs j}\phi B_1^{j_1}\cdots B_n^{j_n} ||^2\nonumber\\
&\le  \sum\limits_{\substack{{\bs j}=(j_1,\dots, j_n)\\ j_1,\dots, j_n = 0}}^{k_1,\dots, k_n}|\alpha_{\bs j}|^2\nonumber\\
&= ||f_{k_1\cdots k_n}||^2_2\label{ST14}
\end{align}

This yields that $\{\phi f_{k_1\cdots k_n}\}$ is a Cauchy sequence in $\cl M$, hence converges to some $h$ in $\cl M.$ 
For $p=2,$ we deduce $\phi f=h$ by calculating the $({\bs i},{\bs k})^{th} \ B$-Fourier coefficient of $h$ using exactly the same arguments as we used in Case $p=2$ of Lemma \ref{STL2}. When $1\le p<2,$ $\cl M$ is boundedly contained in $H^p(\bb D^n)$ which implies that $\{\phi f_{k_1\dots k_n} \}$ converges to $h$ in $H^p(\bb D^n).$  Also, $f_{k_1\dots k_n}$ converges to $f$ in $H^2(\bb D^n).$ Thus  
$\phi f=h,$ since convergence in $p$-norm and 
$2-$norm implies pointwise convergence. Therefore, $\phi f = h$ irrespective of the value of $p.$ 

Hence, $\phi H^2(B_1,\dots , B_n)\subseteq \cl M\subseteq H^p(\bb D^n)$ for $1\le p\le 2.$ This, as we noted earlier as well, readily establishes that 
$\phi H^2(\bb D^n)\subseteq H^p(\bb D^n).$     
Consequently, $\cl W_{I_n}\subseteq H^{2p/{2-p}}(\bb D^n),$ where $\frac{2p}{2-p}$ means infinity when $p=2.$ Also, equation (\ref{ST14}) yields 
$||\phi f||_{\cl M}\le ||\phi||_{\cl M}||f||_2$ whenever $\phi \in \cl W_{I_n}$ and $f\in H^2(B_1,\dots,B_n)$.  

\vspace{.3 mm}

We will now show that the dimension of $\cl W_{I_n}$ can at most be $r_1\cdots r_n.$ To this end, let $\{\phi_1,\dots, \phi _r \}$ be an orthonormal set in $\cl W_{I_n}.$ 
Then, 
$$
\cl X=\phi_1H^2(B_1,\dots,B_n)+\cdots +\phi_r H^2(B_1,\dots, B_n)
$$
is a vector subspace of $H^p(\bb D^n).$ Note that the set $\{\phi_1,\dots, \phi_r\}\subseteq \cl M$ satisfies the hypotheses of Lemma \ref{uniq}, therefore,  
$\phi_1f_1+\cdots +\phi_rf_r=0$ for $f_1,\dots, f_r\in H^2(B_1,\dots, B_n)$ if and only if  
$f_1=\cdots =f_r=0.$ As a result,
$$
||\sum_{i=1}^r \phi_if_i||_{\cl X}^2 = \sum_{i=1}^r ||f_i||_2^2
$$
 defines a well-defined norm on $\cl X.$ In fact, $\cl X$ is a Hilbert space with respect to this norm.   
%
%
 
 Further, for any $f_1,\dots, f_r\in H^2(B_1,\dots,B_n)$  
 \begin{align*}
 ||\phi_1f_1+\cdots +\phi_r f_r ||_p &\le ||\phi_1f_1||_p+\cdots + ||\phi_r f_r||_p\\
&\le ||\phi_1||_{\frac{2p}{2-p}} ||f_1 ||_2+\cdots +||\phi_r||_{\frac{2p}{2-p}} ||f_r||_2\\
\end{align*}
Hence, $\cl X$ is a Hilbert space that is boundedly contained in $H^p(\bb D^n).$ Also, each $T_i$ is an isometry on $\cl X.$ Therefore, using Theorem \ref{STThm}, 
$dim(\cl X_1\cap\cdots\cap \cl X_n)$ can at most be $r_1\cdots r_n,$ where $\cl X_i=\cl X\ominus T_i{\cl X}.$ We can easily see that $\{\phi_1,\dots, \phi_r\}$ is an orthonormal set in $\cl X.$ Consequently, to show that $r\le r_1\cdots r_n$, it is enough to show that each $\phi_i\in \bigcap_{j=1}^n {\cl X}_j.$ Fix any $1\le j\le n,$ and let $f\in T_j{\cl X}.$ Set 
$f=T_j(\sum_{i=1}^r \phi_i f_i)$ for some $f_1,\dots, f_r\in H^2(B_1,\dots,B_n).$ Then, 
$$\langle{\phi_i,T_jf}\rangle_{\cl X}=\langle{\phi_i, \phi_1 B_j f_1+\cdots+\phi_r B_j f_r}\rangle_{\cl X} = \langle{1,B_jf_i}\rangle_2=0
$$
as $B_j(0)=0.$ Thus, $\phi_i\perp T_j{\cl X}$ which means $\phi_i\in {\cl X}_j.$ Thus, $\{\phi_1,\dots, \phi_r\}$ is an orthonormal set in 
$\bigcap_{j=1}^n {\cl X}_j$ which implies that $r\le r_1\cdots r_n.$ Hence, dimension of $\cl W_{I_n}$ can at most be $r_1\cdots r_n.$  

Lastly, let $\{\phi_1,\cdots, \phi_r\}$ be an orthonormal basis of $\cl W_n.$ Then, using equation (\ref{rep2}), we get that 
\begin{equation}\label{direct}
\phi_1H^2(B_1,\dots,B_n)+\cdots+\phi_rH^2(B_1,\dots,B_n)
\end{equation}
is dense in $\cl M.$ Furthermore, using Lemma \ref{uniq} again, we conclude that the sum on the right hand side of equation (\ref{direct}) is in fact an algebraic direct sum. Hence,
$$
\cl M = \overline {\phi_1H^2(B_1,\dots, B_n)\oplus \cdots \oplus \phi_r H^2(B_1,\dots, B_n)}.
$$
Finally, we note that the assertions $(a)$ and $(b)$ from the statements of the theorem have already been shown to hold true for any unit vector in $\cl W_{I_n},$ in particular, they are true for $\phi_i's.$ This completes the proof.
\end{proof}

\begin{remark}\label{n=1} In Theorem \ref{Hp-bdd-sv}, the bounded containment of the sub-Hardy Hilbert space $\cl M$ in 
$H^p(\bb D^n)$ for $1\le p<2$  is used only to show that the subspace $\cl W_{I_n}$ is a subset of $H^\frac{2p}{2-p}(\bb D^n).$ We could work without this condition for 
$p=2$ case because of equation (\ref{ST7}) which is a decomposition of $H^2(\bb D^n)$ in terms of $H^2(B_1, \dots, B_n).$ A similar decomposition exists for $H^p(\bb D), \ 1\le p\le \infty$ which for $p=2$ is indeed equation (\ref{ST7}) with $n=1.$ We brief it here for reader's convenience. Suppose $B$ is a finite Blaschke product with $k$ factors and $e_{i0}, \ 1\le i\le k-1$ are functions defined by equation (\ref{onb}) corresponding to $B.$ Then \cite{LaSt} proves that   
$$
H^p(\bb D) =e_{00}H^p(B)\oplus \cdots\oplus e_{(k-1)0}H^p(B),
$$ 
where $H^p(B)$ is the closed linear span of $\{B^j:j\ge 0\}$ in $H^p(\bb D)$ and the sum is an algebraic direct sum for $p\ne 2.$ It is 
not difficult to check that $H^p(B)=\{f\circ B:f\in H^p(\bb D)\}.$ Now, by using this decomposition and the fact that elements of $H^p(\bb D)$ are power series on the unit disk we can easily show that elements of $\cl W_{I_1}$ multiplies $H^2(B)$ into $\cl M$ without assuming a bounded containment of $\cl M$ into $H^p(\bb D).$ Therefore, the bounded containment condition is redundant for one variable case in Theorem \ref{Hp-bdd-sv}. 
\end{remark}

\begin{thm}\label{p>2} Let $\cl M$ be a Hilbert space which is a vector subspace of $H^p(\bb D^n)$ for some $p>2.$ Suppose the tuple $(T_1,\dots, T_n)$ satisfies the conditions $(i), \ (ii),$ and $(iii)$ of Theorem \ref{Hp-bdd-sv}. Then $\cl M=\{0\}.$ 
\end{thm}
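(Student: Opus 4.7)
My plan is to argue by contradiction. Assume $\cl M\ne\{0\}$. Because $p>2$ gives the continuous inclusion $H^p(\bb D^n)\subseteq H^2(\bb D^n)$, $\cl M$ is a nontrivial vector subspace of $H^2(\bb D^n)$, and the $p=2$ instance of Theorem \ref{Hp-bdd-sv} (which needs no bounded-containment hypothesis) applies and yields an orthonormal family $\{\phi_1,\dots,\phi_r\}\subseteq\cl W_{I_n}\subseteq H^\infty(\bb D^n)$, not all zero, with
$$
\cl M=\overline{\bigoplus_{i=1}^r\phi_i H^2(B_1,\dots,B_n)}
$$
and $||\phi_i f||_{\cl M}\le ||f||_2$ for every $f\in H^2(B_1,\dots,B_n)$. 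Fix $\phi_1\ne 0$; then $\phi_1 f\in\cl M\subseteq H^p(\bb D^n)$ for every $f\in H^2(B_1,\dots,B_n)$, so the multiplication map $M_{\phi_1}:H^2(B_1,\dots,B_n)\to H^p(\bb D^n)$ is well-defined and linear, and its graph is closed (since convergence in either Banach space forces pointwise convergence on $\bb D^n$). The closed graph theorem therefore furnishes a constant $C>0$ with $||\phi_1 f||_p\le C||f||_2$ for every $f\in H^2(B_1,\dots,B_n)$.

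The goal is now to contradict this bound. For $\beta=(\beta_1,\dots,\beta_n)\in\bb D^n$ set
$$
k_\beta(w)=\prod_{j=1}^n\frac{\sqrt{1-|\beta_j|^2}}{1-\bar\beta_j w_j},\qquad f_\beta(z):=k_\beta(B_1(z_1),\dots,B_n(z_n)).
$$
Each $B_j$ is inner with $B_j(0)=0$, so the boundary push-forward $B_*m$ equals $m$ on $\bb T^n$, which makes $g\mapsto g\circ B$ an isometric embedding of $H^2(\bb D^n)$ into $H^2(B_1,\dots,B_n)$; in particular $||f_\beta||_2=||k_\beta||_2=1$. The same measure preservation, applied via the disintegration of $|\phi_1|^p\,dm$ along the fibers of $B$, gives
$$
||\phi_1 f_\beta||_p^p=\int_{\bb T^n}|\phi_1(z)|^p|k_\beta(B(z))|^p\,dm(z)=\int_{\bb T^n}\Phi(w)\,|k_\beta(w)|^p\,dm(w),
$$
where $\Phi(w):=E[|\phi_1|^p\mid B](w)\in L^1(\bb T^n)$ is the fiber average of $|\phi_1|^p$. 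Because $\phi_1\ne 0$, $\int_{\bb T^n}\Phi\,dm=||\phi_1||_p^p>0$, so $\Phi$ has a Lebesgue point $w_0\in\bb T^n$ with $\Phi(w_0)>0$.

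Choose $\beta=rw_0$ and let $r\uparrow 1$. The elementary one-variable estimate
$$
\int_{\bb T}\frac{(1-r^2)^{p/2}}{|1-r\bar w_{0,j}w_j|^p}\,dm(w_j)\asymp(1-r^2)^{1-p/2},
$$
which diverges precisely because $p>2$, yields $\int_{\bb T^n}|k_\beta|^p\,dm\to\infty$, while $|k_\beta|^p/\int|k_\beta|^p\,dm$ is an approximate identity at $w_0$. At a Lebesgue point this forces $\int_{\bb T^n}\Phi|k_\beta|^p\,dm\sim\Phi(w_0)\int_{\bb T^n}|k_\beta|^p\,dm\to\infty$, contradicting $||\phi_1 f_\beta||_p\le C$. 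Hence $\cl M=\{0\}$. I expect the main obstacle to be the disintegration step that produces $\Phi$; it rests on the measure-preservation $B_*m=m$ and on handling the $r_1\cdots r_n$-to-one fibration $B:\bb T^n\to\bb T^n$ carefully, but once $\Phi$ is in hand the Lebesgue-point concentration argument is routine.
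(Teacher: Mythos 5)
Your proof is correct, and its first half coincides with the paper's: both reduce to the $p=2$ case of Theorem \ref{Hp-bdd-sv} (legitimate, since $H^p(\bb D^n)\subseteq H^2(\bb D^n)$ for $p>2$ and the $p=2$ case needs no bounded containment) to produce a nonzero $\phi_1\in H^\infty(\bb D^n)$ multiplying $H^2(B_1,\dots,B_n)$ into $H^p(\bb D^n)$. Where you diverge is in the punchline. The paper first upgrades, via the decomposition (\ref{ST7}), to the statement that $\phi_1$ multiplies all of $H^2(\bb D^n)$ into $H^p(\bb D^n)$, and then simply asserts, without proof, that no nonzero $H^\infty$ function can do this. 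You instead stay inside $H^2(B_1,\dots,B_n)$ and actually prove the needed multiplier statement: closed graph gives $\|\phi_1 f\|_p\le C\|f\|_2$, and testing against $k_\beta\circ B$ --- using that $B_j(0)=0$ makes the boundary map measure-preserving, so $\|k_\beta\circ B\|_2=1$ while the conditional expectation $\Phi=E[|\phi_1|^p\mid B]$ converts $\|\phi_1(k_\beta\circ B)\|_p^p$ into $\int\Phi|k_\beta|^p\,dm$ --- blows up as $\beta\to\bb T^n$ because $\int|k_\beta|^p\,dm\asymp(1-r)^{n(1-p/2)}\to\infty$ for $p>2$. So your argument is more self-contained than the paper's and supplies exactly the step the paper leaves to the reader.

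Two small points. First, for $n\ge 2$ the concentration of the product kernel $|k_{rw_0}|^p/\int|k_{rw_0}|^p\,dm$ at $w_0$ requires a \emph{strong} (rectangle-wise) Lebesgue point of $\Phi$; differentiation along rectangles can fail for general $L^1$ functions, but here $|\phi_1|^p$ is bounded, hence so is $\Phi$, and a.e.\ point is a strong Lebesgue point --- you should say this rather than only recording $\Phi\in L^1$. Second, a shortcut that avoids the disintegration entirely: from $\|\phi_1(k_\beta\circ B)\|_p\le C$ and the pointwise bound $|g(z)|\le\|g\|_p\prod_j\bigl(\tfrac{1+|z_j|}{1-|z_j|}\bigr)^{1/p}$ for $g\in H^p(\bb D^n)$, evaluating at any $z$ with $B(z)=\beta$ and using $1-|B_j(z_j)|^2\asymp 1-|z_j|^2$ for finite Blaschke products yields $|\phi_1(z)|\lesssim\prod_j(1-|z_j|^2)^{1/2-1/p}\to 0$, forcing $\phi_1\equiv 0$ directly.
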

\begin{proof} Since $p>2, \ \cl M$ is a vector subspace of $H^2(\bb D^n)$ as well. Thus, using the same arguments as we used in Theorem 
\ref{Hp-bdd-sv} for Case $p=2,$ we get

$$
\cl M = \bigoplus_{r\in I_n}T^r_{I_n}\cl W_{I_n}
$$
and functions in $\cl W_{I_n}$ multiplies $H^2(B_1,\dots,B_n)$ into $\cl M\subseteq H^p(\bb D^n).$ This implies that functions in $\cl W_{I_n}$ multiplies $H^2(\bb D^n)$ into 
$H^p(\bb D^n)\subseteq H^2(\bb D^n),$ hence $\cl W_{I_n}\subseteq H^\infty(\bb D^n).$ But no non-zero function in $H^\infty(\bb D^n)$ can multiply $H^2(\bb D^n)$ into $H^p(\bb D^n).$ Hence $\cl W_{I_n}=\{0\}$ which implies $\cl M=\{0\}.$ This completes the proof.
\end{proof}

\begin{cor}\label{LS}\cite[Theorem 3.1]{LS} Let $\cl M$ be a Hilbert space which is a vector subspace of $H^2(\bb D).$ Suppose the operator of 
multiplication with the coordinate function $z$ is well-defined on $\cl M$ and is a near-isometry on $\cl M$. Then there exists 
a function $\phi\in \cl M\cap H^\infty(\bb D)$ with 
$||\phi||_{\cl M}=1$ such that 
$$
\cl M= \overline{\phi H^2(\bb D)} \quad ({\rm closure \ is \ in \ the \ norm \ on \ \cl M })
$$
and $||\phi f||_{\cl M}\le ||f||_{2}$ for all $f\in H^2(\bb D).$
\end{cor}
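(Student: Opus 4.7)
The plan is to obtain this corollary as a direct specialization of Theorem \ref{Hp-bdd-sv} to the case $n=1$, $p=2$, with the single Blaschke factor chosen to be $B_1(z)=z$ (so $r_1=1$). In this dictionary, $T_{B_1}$ is precisely multiplication by the coordinate function $z$, the doubly commuting hypothesis is vacuous since there is only one operator, and the near-isometry hypothesis is transferred verbatim. The requirement of bounded containment in $H^p(\bb D^n)$ for $1\le p<2$ does not appear because $p=2$; moreover, Remark \ref{n=1} assures us that in the one-variable setting this hypothesis would be redundant anyway. Hence all conditions of Theorem \ref{Hp-bdd-sv} are met.

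Applying Theorem \ref{Hp-bdd-sv} yields an orthonormal set $\{\phi_1,\dots,\phi_r\}\subseteq \cl M$ with $r\le r_1=1$ such that $\cl M$ is the closure of the algebraic direct sum of $\phi_i H^2(B_1)$. If $\cl M=\{0\}$ the claim is trivial, so assume $\cl M\ne\{0\}$. Inspecting the proof of Theorem \ref{Hp-bdd-sv}, equation (\ref{rep2}) specializes to
$$
\cl M=\bigoplus_{k\ge 0} T_z^{k}\,\cl W_{I_1},
$$
which forces $\cl W_{I_1}\ne\{0\}$, hence $r=1$. Write $\phi=\phi_1$; then $||\phi||_{\cl M}=1$ and, since $\tfrac{2p}{2-p}=\infty$ when $p=2$, we obtain $\phi\in H^{\infty}(\bb D)$.

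The identification $H^2(B_1)=H^2(\bb D)$ is automatic when $B_1(z)=z$, because the closed linear span of $\{B_1^m:m\ge 0\}=\{z^m:m\ge 0\}$ in $H^2(\bb D)$ is the whole of $H^2(\bb D)$. Consequently the decomposition reads $\cl M=\overline{\phi H^2(\bb D)}$, where the closure is in the norm of $\cl M$, and in particular $\phi\in \cl M\cap H^\infty(\bb D)$. The norm estimate $||\phi f||_{\cl M}\le ||f||_2$ for $f\in H^2(\bb D)$ is the one derived during the proof of Theorem \ref{Hp-bdd-sv} (the inequality immediately following (\ref{ST14}), namely $||\phi f||_{\cl M}\le ||\phi||_{\cl M}\,||f||_2$), combined with $||\phi||_{\cl M}=1$.

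I do not expect any genuine obstacle: the corollary is essentially a translation of Theorem \ref{Hp-bdd-sv} through the parameter choice $n=1$, $p=2$, $B_1(z)=z$. The only points that merit a sentence each in the write-up are (i) why $r=1$ rather than $r=0$ when $\cl M\ne\{0\}$, handled by the Wold-type representation (\ref{rep2}), and (ii) the explicit identification $H^2(B_1)=H^2(\bb D)$ in the degenerate one-factor case.
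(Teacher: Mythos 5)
Your proposal is correct and follows exactly the route the paper takes: the paper's own proof of this corollary is the one-line observation that it is the case $n=1$, $p=2$, $B(z)=z$ of Theorem \ref{Hp-bdd-sv}. The extra details you supply (why $r=1$ when $\cl M\ne\{0\}$, the identification $H^2(B_1)=H^2(\bb D)$, and extracting $\|\phi f\|_{\cl M}\le\|f\|_2$ from the inequality following (\ref{ST14})) are accurate and merely make explicit what the paper leaves implicit.
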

\begin{proof}This is simply the one-variable case of Theorem \ref{Hp-bdd-sv} for $p=2$ where the Blaschke product is the coordinate function $z.$ 
\end{proof}

\begin{cor}\cite[Theorem 4.1]{ST} Let $\cl M$ be a Hilbert space such that 
\begin{enumerate}[(i)]
\item $\cl M$ is algebraically contained in $H^2(\bb D);$
\item $\cl M$ is invariant under $T_B,$ the operator of multiplication with a finite Blaschke product $B$ with $m$ factors;
\item $T_B$ is an isometry on $B.$
\end{enumerate}
Then $$
\cl M=\phi_1H^2(B)\oplus \cdots \oplus \phi_r H^2(B)
$$
where each $\phi_j\in H^\infty, \ 1\le j\le r,$ and $r\le m.$ Further,
$$
||\phi_1f_1+\cdots +\phi_r f_r||_{\cl M}^2 = ||f_1||_2^2+\cdots +||f_r||_2^2, \quad f_j\in H^2(B)
$$
for each $f=\phi_1f_1+\cdots+\phi_r f_r$ in $\cl M.$
\end{cor}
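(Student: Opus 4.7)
The plan is to deduce this corollary as an immediate specialization of Theorem \ref{STThm} to the one-variable setting. Setting $n = 1$, $p = 2$, and $B_1 = B$ (so that $r_1 = m$), the polydisc $\bb D^n$ collapses to the disc $\bb D$ and the doubly commuting hypothesis of Theorem \ref{STThm} becomes vacuous. The algebraic containment $\cl M \subseteq H^2(\bb D)$ in hypothesis $(i)$ of the corollary matches the requirement in Theorem \ref{STThm} that $\cl M$ be a vector subspace of $H^2(\bb D^n)$, and since $p = 2$ no bounded-containment condition needs to be verified. Hypothesis $(ii)$ of the corollary matches condition $(i)$ of Theorem \ref{STThm}, and hypothesis $(iii)$ (that $T_B$ acts as an isometry on $\cl M$, which must be what condition $(iii)$ of the corollary intends) matches condition $(ii)$ there.

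With the hypotheses checked, I would simply read off the conclusion of Theorem \ref{STThm}. It produces an orthonormal set $\{\phi_1, \ldots, \phi_r\}$ in $\cl M$ with $r \le r_1 = m$, whose members lie in $H^{2p/(2-p)}(\bb D) = H^{\infty}(\bb D)$, together with the orthogonal direct sum
$$
\cl M = \phi_1 H^2(B) \oplus \cdots \oplus \phi_r H^2(B)
$$
and the isometric norm identity
$$
\|\phi_1 f_1 + \cdots + \phi_r f_r\|_{\cl M}^2 = \|f_1\|_2^2 + \cdots + \|f_r\|_2^2
$$
for every $f_1, \ldots, f_r \in H^2(B)$. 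These are exactly the conclusions stated in the corollary.

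Since the corollary is essentially a verbatim restatement of the $n = 1$, $p = 2$ case of Theorem \ref{STThm}, there is no genuine obstacle to overcome: every nontrivial ingredient (the Wold-type decomposition \ref{wdinvsv}, the reduction $\cl M_A = \{0\}$ whenever $I_n \setminus A \ne \emptyset$ forced by $B(0) = 0$, Lemmas \ref{STL2} and \ref{STL3} controlling the dimension of $\cl W_{I_n}$) has already been established in the proof of Theorem \ref{STThm}. The corollary thus serves as a consistency check showing that the classical one-variable isometric setting of \cite{ST} is recovered as a special case of our multivariable framework.
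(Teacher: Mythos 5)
Your argument is correct, but you route the corollary through a different theorem than the paper does. The paper's proof is the one-line observation that every isometry is a near-isometry, so the result ``follows from Theorem \ref{Hp-bdd-sv} by taking $n=1$ and $p=2$''; you instead specialize Theorem \ref{STThm} (the isometric version) to $n=1$, $p=2$. Your choice is arguably the tighter citation for this particular statement: Theorem \ref{STThm} literally delivers the orthogonal decomposition $\cl M=\phi_1H^2(B)\oplus\cdots\oplus\phi_rH^2(B)$ with no closure and the exact identity $\|\phi_1f_1+\cdots+\phi_rf_r\|_{\cl M}^2=\|f_1\|_2^2+\cdots+\|f_r\|_2^2$, which are precisely the conclusions of the corollary, whereas Theorem \ref{Hp-bdd-sv} as stated only yields the closure of an algebraic direct sum and the inequality $\|\phi_if\|_{\cl M}\le\|f\|_2$, so the paper's citation implicitly asks the reader to upgrade those conclusions using the isometry hypothesis. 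You are also right that hypothesis $(iii)$ of the corollary (``$T_B$ is an isometry on $B$'') must be read as ``on $\cl M$,'' and that the doubly commuting condition is vacuous for $n=1$. The only cosmetic caveat is that Theorem \ref{STThm} is stated for non-zero $\cl M$, so the degenerate case $\cl M=\{0\}$ (where $r=0$ and the sum is empty) should be set aside explicitly; this does not affect the substance of either argument.
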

\begin{proof} Every isometry is a near-isometry and hence the result follows form Theorem \ref{Hp-bdd-sv} by taking $n=1$ and $p=2.$
\end{proof}

\subsection*{Acknowledgements} The first and third authors
thank the Mathematical Sciences Foundation,
Delhi for support and facilities needed to complete
the present work. The research of first author is supported by the Mathematical Research Impact Centric Support (MATRICS) grant, File No: MTR/2017/000749, by the Science and Engineering Research Board (SERB), Department of Science \& Technology (DST), Government of India.

\end{document}